\newtheorem{theorem}{Theorem}[section]
\newtheorem{lemma}[theorem]{Lemma}
\newtheorem{corollary}[theorem]{Corollary}
\newtheorem{fact}[theorem]{Assertion}
\theoremstyle{definition}
\newtheorem{definition}[theorem]{Definition}
\newtheorem{example}[theorem]{Example}
\theoremstyle{remark}
\newtheorem{remark}[theorem]{Remark}
\numberwithin{equation}{section}
\newcommand{\mc}{\mathcal}
\newcommand{\be}{\begin{equation}}
\newcommand{\ee}{\end{equation}}
\newcommand{\C}{{\mathbb C}}
\newcommand{\Z}{{\mathbb Z}}
\newcommand{\X}{{\mathbb X}}
\newcommand{\id}{{\rm{id}}}
\newcommand{\mf}{\mathfrak}
\newcommand{\fg}{{\mf g}}
\newcommand{\fb}{{\mf b}}
\newcommand{\cI}{\mc I}
\newcommand{\cS}{\mc S}
\newcommand{\tL}{\tilde{L}}
\newcommand{\inv}{^{-1}}
\newcommand{\U}{{\rm{U}}}
\newcommand{\End}{{\rm{End}}}
\newcommand{\Hom}{{\rm{Hom}}}
\newcommand{\GL}{{\rm{GL}}}
\newcommand{\Sym}{{\rm{Sym}}}
\newcommand{\tr}{{\rm{tr}}}
\newcommand{\fso}{{\mathfrak {so}}}
\newcommand{\gl}{{\mathfrak {gl}}}
\newcommand{\osp}{{\mathfrak {osp}}}
\newcommand{\Sp}{{\rm Sp}}
\newcommand{\ot}{\otimes}
\newcommand{\lr}{\longrightarrow}
\newcommand{\sdim}{{\rm sdim\,}}
\newcommand{\OSp}{{\rm OSp}}
\newcommand{\SOSp}{{\rm SOSp}}
\newcommand{\SO}{{\rm SO}}
\begin{document}
\definecolor{bondiblue}{rgb}{0.0, 0.58, 0.71}
\definecolor{brown(traditional)}{rgb}{0.59, 0.29, 0.0}
\normalfont

\title[Invariants of the orthosymplectic Lie superalgebra]{Invariants of the orthosymplectic Lie superalgebra
and super Pfaffians}

\author{G.I. Lehrer and R.B. Zhang}
\thanks{This research was supported by the Australian Research Council}
\address{School of Mathematics and Statistics,
University of Sydney, N.S.W. 2006, Australia}
\email{gustav.lehrer@sydney.edu.au, ruibin.zhang@sydney.edu.au}
\begin{abstract}
{
Given a complex orthosymplectic superspace $V$,
the orthosymplectic Lie superalgebra $\osp(V)$ and general linear algebra $\gl_N$ both act naturally
on the coordinate super-ring $\cS(N)$ of the dual space of $V\otimes\C^N$, and their actions commute.
Hence the subalgebra $\cS(N)^{\osp(V)}$ of $\osp(V)$-invariants in $\cS(N)$
has a $\gl_N$-module structure.
}
We introduce the space of super Pfaffians
as a simple $\gl_N$-submodule of $\cS(N)^{\osp(V)}$,
give an explicit formula for its highest weight vector,
and show that the super Pfaffians and the elementary
(or `Brauer') $\OSp(V)$-invariants
together generate $\cS(N)^{\osp(V)}$ as an algebra.
The decomposition of $\cS(N)^{\osp(V)}$
as a direct sum of simple $\gl_N$-submodules is obtained
and shown to be multiplicity free. Using Howe's $(\gl(V), \gl_N)$-duality on
$\cS(N)$, we deduce from the decomposition that the subspace of $\osp(V)$-invariants in any simple
$\gl(V)$-tensor module is either $0$ or $1$-dimensional.
These results also enable us to determine the
$\osp(V)$-invariants in the tensor powers $V^{\otimes r}$ for all $r$.
\end{abstract}
\maketitle

\tableofcontents

\section{Introduction}\label{sect:intro}

In \cite{LZ5, LZ6}, we established the first and second fundamental theorems (FFT and SFT)
of invariant theory for the  orthosymplectic supergroup.  The FFT in particular tells us that
the invariants of the orthosymplectic supergroup all arise,  in some appropriate sense,
from the bilinear form defining the supergroup. We continue the study here to develop the invariant theory
for the orthosymplectic Lie superalgebra.

It is well-known that the invariant theory of the orthogonal Lie algebra, or
special orthogonal group, is more intricate than that of the
orthogonal group.  Consider for example the actions of the complex orthogonal group ${\rm O}(m)$
and its Lie algebra $\fso(m)$ on the $m$-th tensor power of
$M=\C^m$. There exists an $\fso(m)$-module homomorphism
$\det: \C\longrightarrow  M^{\otimes m}$ where $\C$ is the trivial $\fso(m)$-module,
 given by the composition
$\C\stackrel{\sim}\longrightarrow \wedge^m M\hookrightarrow M^{\otimes m}$.
This determinant map is also an ${\rm O}(m)$-module homomorphism,
but $\C$ then needs to be thought of as the non-trivial $1$ dimensional ${\rm O}(m)$-representation.
The image of $\det$ is then not an ${\rm O}(m)$-invariant, but a pseudo ${\rm O}(m)$-invariant in the sense that
$g(\det)=\det(g) \det$ for any $g\in {\rm O}(m)$, where $\det(g)$ is the determinant of $g$
as an $m\times m$ matrix.  Then for all  $r(\geq m)$, the subspace of $\fso(m)$-invariants in $M^{\otimes r}$ is ``generated" by
the determinant map and the pull-back of the bilinear form defining ${\rm O}(m)$
(see e.g., \cite[Appendix F]{FH}).

We study here the analogous question for the orthosymplectic Lie superalgebra.
The main issue is to understand those invariants of
the orthosymplectic Lie superalgebra, which are not invariants of the orthosymplectic supergroup.
We show in this work that to generate all $\osp$-invariants,
just one invariant of $\osp$ is required, which is not an invariant of the orthosymplectic supergroup
$\OSp$. This is the super Pfaffan.
In references \cite{S1, S2}, Sergeev anticipated the existence of
a super Pfaffian, which is such an invariant. However, the super Pfaffian remained
somewhat mysterious, as the arguments for its existence in \cite{S1, S2}
were abstract. 

By using integration over the supergroup in the Hopf superalgebraic setting \cite{SZ01, SZ05},
we described in \cite{LZ6} a construction for the super Pfaffian, which is quite appealing
conceptually.   In principle the construction allows one to compute the super Pfaffian,
but even in small dimensions, it is quite nontrivial to derive an explicit expression for it
this way,  see Example \ref{eg:osp2-2} below. The aims of the present paper are to gain a better
understanding of the super Pfaffian, and to present a proof of its sufficiency for generation of
all invariants of $\osp$.

We now briefly describe the main results of the paper.

Let $V$ be a complex superspace with $\sdim(V)=(m|2n)$, regarded as the natural module for
the orthosymplectic Lie algebra $\osp(V)$ (see \cite[\S 2.4]{LZ6} for details),
and let $\cS(N)$ be the coordinate ring of the dual space of
$V^{\oplus N}\simeq V\ot_{\C}\C^N$,
the sum of $N$ copies of  $V$.
Then $\cS(N)$ admits commuting actions of $\osp(V)$ and $\gl_N$,
and hence the subalgebra  $\cS(N)^{\osp(V)}$ of $\osp(V)$-invariants is a $\gl_N$-module.
Theorems \ref{lem:hwv-even} and \ref{thm:pseudo-inv} describe the decomposition of
 $\cS(N)^{\osp(V)}$ as a $\gl_N$-module; specifically, we determine the simple submodules and
show that they all have unit multiplicity.  The highest weight vector
of each simple submodule is obtained.
These theorems imply Corollary \ref{cor:osp-gl}, which shows that
$\dim(V^\lambda)^{\osp(V)}\le 1$ for any simple tensorial $\gl(V)$- module $V^\lambda$,
and gives the necessary and sufficient conditions on the highest weight $\lambda$ for equality to hold.

We introduce the space $\Gamma(N)$ of super Pfaffians  in $\cS(N)^{\osp(V)}$
(see Definition \ref{def:Pfaffian-space});
this is a simple $\gl_N$-submodule.
Theorem \ref{thm:main1} proves the
formula \eqref{def:Omega} for the highest weight vector
of $\Gamma(N)$, which generates the entire space of super Pfaffians as $\gl_N$-module.
The formula is simple and explicit,  which renders transparent
the invariance properties of the super Pfaffians under the action of
the orthosymplectic Lie superalgebra. It plays a crucial role
in the proof of Theorem \ref{thm:generating-set}.

Theorem \ref{thm:generating-set} is the first fundamental theorem (FFT) of invariant theory for the
 orthosymplectic Lie superalgebra $\osp(V)$. It states that the super Pfaffians and
 the elementary invariants of the orthosymplectic supergroup
 (see Remark \ref{rem:elmt-invs}) generate all invariants of the
orthosymplectic Lie superalgebra acting on the superalgebra $\cS(N)$.
 This is analogous to the case of the orthogonal Lie algebra,
 but the super Pfaffians in the current context are more complicated than the
determinant.

The results on $\cS(N)$ may be interpreted in terms of the tensor powers $V^{\otimes N}$,
leading to a thorough understanding of the subspace $\left(V^{\otimes N}\right)^{\osp(V)}$ of
$\osp(V)$-invariants in $V^{\otimes N}$ for any $N$.
The key results are given in Theorem \ref{thm:tensor-invs} and Corollary  \ref{cor:tensor-pseudo-invs}.

There has been much interest in recent years in studying endomorphism algebras of Lie superalgebras and Lie supergroups,
and our results may also be interpreted in terms of these associative algebras of endomorphisms.
The FFT for $\OSp(V)$ asserts  \cite{LZ5} that the endomorphism
algebra $\End_{\OSp(V)}(V^{\otimes r})$ is a quotient of the Brauer algebra of degree $r$ with parameter $m-2n$,
where $\sdim(V)=(m|2n)$.  Now $\End_{\OSp(V)}(V^{\otimes r})$ is contained as a proper
subalgebra in $\End_{\osp(V)}(V^{\otimes r})$. One can deduce from Corollary \ref{cor:tensor-pseudo-invs}
the additional endomorphisms needed.
It would be very worthwhile to better understand the structure of $\End_{\osp(V)}(V^{\otimes r})$.

\section{Invariants of the orthosymplectic supergroup}\label{sect:ivariants}
\subsection{Preliminaries}
Let $V=\C^{m|2n}=\C^m\oplus\C^{2n}$,  where  $\C^m$ (resp. $\C^{2n}$) is the even (resp. odd) subspace.
Let $(e_1, e_2, \dots, e_m)$ and $(e_{m+1}, \dots, e_{m+2n})$ be the standard bases for $\C^m$ and $\C^n$ respectively
and call $B=(e_1, e_2, \dots, e_m; e_{m+1}, \dots, e_{m+2n})$  the standard basis of $V$.
Endow $V$ with an even non-degenerate supersymmetric bilinear form $(\ , \ )$, and write
$\kappa= (\kappa_{i j})$ with $(e_i, e_j)= \kappa_{i j}$ for all $i, j$.
We shall choose the basis $B$ so that
$\kappa= \begin{pmatrix} I_m & {\bf 0} \\ {\bf 0} & \eta\end{pmatrix}$
with $\eta=\begin{pmatrix} {\bf 0}& I_n & \\ -I_n & {\bf 0}\end{pmatrix}$ and $I_r$ being the $r\times r$ identity matrix.
{
Then we have the evaluation map $\hat{C}: V\ot V\longrightarrow \C$, $v\ot w \mapsto (v, w)$, and co-evaluation map
$
\check{C}: \C\longrightarrow V\ot V,
$
defined by
\[
(\id_V\ot \hat{C})(\check{C}(1)\ot v) = (\hat{C}\ot \id_V)(v\ot \check{C}(1))= v, \quad \forall v\in V.
\]
In terms of the basis elements, we have
\begin{eqnarray}\label{eq:rigidity}
\check{C}(1)=\sum_{a=1}^{m+2n} e_a\ot (\kappa^{-1})_{a b}  e_b.
\end{eqnarray}
}

Denote by $\osp(V)$ the orthosymplectic Lie superalgebra on $V$ defined
with respect to this bilinear form. The restriction of the bilinear form 
to $V_{\bar0}$ (resp. $V_{\bar1}$) is 
symmetric (resp. skew symmetric), thus 
we have the corresponding orthogonal group ${\rm O}(V_{\bar0})$ and symplectic group ${\rm Sp}(V_{\bar1})$.
Let  $\OSp(V)$ be 
the Harish-Chandra super pair
$(\OSp(V)_0, \osp(V))$, where $\OSp(V)_0= {\rm O}(V_{\bar0})\times {\rm Sp}(V_{\bar1})$.
Then $\OSp(V)$  can be regarded as the orthosymplectic supergroup, as explained in \cite{DM}.

{
It is clear that both the evaluation and co-evaluation maps are $\OSp(V)$-invariant.

Fix a positive integer $N$. Let $V^N=V\otimes\C^N$, where $\C^N$ is purely even.
Denote by $\cS(N)$ the coordinate ring of $(V^N)^*=\Hom_\C(V^N, \C)$. Then $\cS(N)$ is the
complex
symmetric  superalgebra over $V^N$ (cf. \cite[\S 3.2]{LZ6}), and we have
\[
\cS(N)=S((V^N)_{\bar0})\otimes\wedge((V^N)_{\bar1}).
\]
}
Clearly,
$\cS(N)=\underbrace{\cS(1)\otimes \dots \otimes \cS(1)}_N$. The
$\Z_+$-grading of $\cS(1)=\oplus_{k\ge 0}\cS^k(1)$ with $\cS^1(1)=V$ gives rise to a
$\Z_+^N$-grading for $\cS(N)=\oplus_{k_1, \dots, k_N\ge 0} \cS^{k_1, \dots, k_N}(N)$ with
$\cS^{k_1, \dots, k_N}(N)=\cS^{k_1}(1)\otimes\dots\otimes\cS^{k_N}(1)$.
This provides a $\Z_+$-grading
 $S(N)=\oplus_{k\ge 0}\cS^k(N)$ with $\cS^k(N)=\oplus_{k_1+\dots+k_N=k}\cS^{k_1, \dots, k_N}(N)$,
and we have a surjective homomorphism of $\Z_+^N$-graded associative superalgebras
\be\label{eq:pi}
\pi: \cS(N)\longrightarrow S((V^N)_{\bar0}),
\ee
which is the identity on $S((V^N)_{\bar0})$ and is the projection to the
constant term (or augmentation map) on $\wedge((V^N)_{\bar1})$.
For any $f\in\cS(N)$, we call $\pi(f)$ its {\em leading term}.

There are commuting actions of $\gl(V)$ and $\gl_N$ (and also of $\GL(V)$ and $GL_N$)
on $V^N=V\otimes \C^N$, which  naturally extend to $\cS(N)$.
A version of Howe duality \cite{H} (also see \cite{CW}) states that as a $\gl(V)\times\gl_N$- module,
\begin{eqnarray}\label{eq:Howe}
\cS(N)=\oplus_\mu V^\mu\otimes L_\mu,
\end{eqnarray}
where $L_\mu$ (resp. $V^\mu$) is the simple module for $\gl_N$ (resp. $\gl(V)$)
with highest weight $\mu=(\mu_1, \mu_2, \dots)$, and the sum is over
all partitions $\mu$ of length $\le N$ lying in the $(m, 2n)$-hook.
The length of a partition $\mu$ is the largest integer $k$ such that $\mu_k\ne 0$,
and $\mu$ is within the $(m, 2n)$-hook if
$\mu_{m+1}\le 2n$.
Note that the decomposition \eqref{eq:Howe} is multiplicity free, that is, the multiplicity of each simple
$\gl(V)\times\gl_N$-submodule is one.  This result will be used repeatedly.

The $\gl(V)$ (resp. $\GL(V)$) action restricts to an action of the orthosymplectic Lie superalgebra $\osp(V)$
(resp. orthosymplectic supergroup $\OSp(V)$) on $\cS(N)$.
It is easy to see that $S((V^N)_{\bar0})$ is an $\OSp(V)_0$-module with trivial
$\Sp((V)_{\bar1})$-action, and $\pi$ is an $\OSp(V)_0$-module homomorphism.

To explain our computations explicitly, we take cordinates as follows: let $f_1=(1\ 0 \ 0 \dots \ 0)$, $f_2=(0 \ 1\ 0 \dots \ 0)$,
$\dots$, $f_N=(0\ 0 \dots 0 \ 1)$ be the standard basis of $\C^N$.
Then $\cS(N)$ is the tensor product of the polynomial algebra in the even variables $x^i_t=e_i\otimes f_t$
($1\le i\le m$ and  $1\le t\le N$) and
the Grassmann (i.e. exterior) algebra generated by the odd variables
$\theta^\mu_t=e_{m+\mu}\otimes f_j$ for $1\le\mu\le 2n$, $1\le t\le N$.

\begin{remark}\label{rem:pi}
In this notation, the `leading term map' $\pi$ satisfies $\pi(x_t^i)=x_t^i$ and $\pi(\theta_t^\mu)=0$, for all $i,t,\mu$.
\end{remark}

\subsubsection{Some notation}\label{ss:not}
For $t=1,\dots,N$, write
$$
\X_t:=(X^1_t, \dots, X^{m+2n}_t)=(x^1_t, \dots, x^m_t, \theta^1_t, \dots, \theta^{2n}_t);
$$
this is an ordered basis of the subspace $V_t:=V\ot f_t$ of $V\ot_\C \C^N$.
%

{
Each $X^a_t$ defines a complex valued linear function on $(V^N)^*= V^*\ot(\C^N)^*$ in the standard way, namely
$X^a_t({\bar v}\ot{\bar f})
=(-1)^{[e_a][{\bar v}]}{\bar v}(e_a){\bar f}(f_t)$ for any ${\bar v}\in V^*$ and ${\bar f}\in (\C^N)^*$.
A monomial function $X_{t_1}^{a_1}X_{t_2}^{a_2}\dots X_{t_k}^{a_k}$ then satisfies
\[
X_{t_1}^{a_1}X_{t_2}^{a_2}\dots X_{t_k}^{a_k}({\bar v}\ot{\bar f})
=(-1)^{[e_{a_1}][{\bar v}]}  \prod_{i=1}^k {\bar v}(e_{a_i}){\bar f}(f_{t_i}).
\]
We remark that for any supercommutative superalgebra $\Lambda$ (e.g., the infinite Grassmann algebra in \cite{LZ6}),
$\Lambda\ot_\C\cS(N)$ may be regarded as the ring of polynomial functions  $(V^N)^*\longrightarrow \Lambda$.

Let $Q(N)=(q_{ij})$ be the $N\times N$ matrix with the $(t,s)$-entry given by
\be\label{eq:qij}
q_{ts}:=\X_t\kappa\inv(\X_s)^{\text{tr}},
\ee
where $\tr$ denotes the transpose of a matrix.
This definition is equivalent to
\be\label{eq:matqij}
Q(N)=\X\kappa\inv \X^\tr,
\ee
where $\X=\begin{pmatrix}
\X_1\\
\X_2\\
.\\
.\\
.\\
\X_N
\end{pmatrix}$.

For any ${\bar v}\ot{\bar f}\in (V^N)^*$, we have
$
q_{s t} ({\bar v}\ot{\bar f}) = ({\bar v}\otimes {\bar v})(\check{C}(1)) {\bar f}(f_s){\bar f}(f_t),
$
where
$\check{C}$ is the co-evaluation map and  $\check{C}(1)$ is given by \eqref{eq:rigidity}.
As $\check{C}(1)$ is $\OSp(V)$-invariant, $q_{s t}$ is an $\OSp(V)$-invariant in $\cS(N)$ for any $s,t$.
}

For $a=1,\dots,m+2n$, let $X_{a t}= \sum_{c=1}^{m+2n}(\kappa^{-1})_{a c} X^c_t$ and
$\partial_{a t}=\frac{\partial}{\partial X_t^a}$. With this notation, the actions of $\osp(V)$ and of $\gl_N$
on $\cS(N)$ may be described as follows.

The action of the orthosymplectic Lie superalgebra $\osp_{m|2n}$ on $\cS(N)$
is realised  by the differential operators $J_{ab}$, defined as follows \cite{Z08}:
\begin{eqnarray}\label{eq:diff-op}
J_{a b} = \sum_{t=1}^N\left(X_{a t}\partial_{b t}
- (-1)^{[a][b]} X_{b t}\partial_{a t}\right), \quad 1\le a, b\le m+2n,
\end{eqnarray}
where $[\  ]: \{1, 2, \dots, m+2n\}\longrightarrow \{0, 1\}$ with $[a]=0$ if $a\le m$
and $[a]=1$ if $a>m$.  Let $\{E_{i j}\mid 1\le i, j\le N\}$ be the standard basis of
$\gl_N$ with the commutation relations
$
[E_{i j}, E_{k \ell}]=\delta_{j k} E_{i \ell} -\delta_{\ell i} E_{k j}.
$
Then $\gl_N$ can be realised on $\cS(N)$ by
\begin{eqnarray}\label{eq:diff-gl}
E_{s t} = \sum_{a=1}^{m+2n} X_s^a\partial_{a t},  \quad  1\le s, t\le N.
\end{eqnarray}

Note that since the $E_{tt}$ are just Euler operators,
the subspaces $\cS^{k_1, \dots, k_N}(N)$, which are the
homogeneous components of $\cS(N)$ with respect to the $\Z_+^N$-grading of $\cS(N)$,
are the weight spaces of this $\gl_N$ module, and
the homogeneous $\Z_+$-graded subspaces are eigenspaces of
$E=\sum_{t=1}^N E_{t t}$. Evidently,  the $\gl_N$-action on $\cS(N)$ preserves the $\Z_+$-grading.

It is easy to show that the Laplacian operators
$
\partial^2_{s t} = \sum_{a, b=1}^{m+2n}\kappa_{a b} \partial_{b s} \partial_{a t}
$
commute with the generators $J_{a b}$ of $\osp(V)$.  Write
$(\partial_{\bar 0})^2_{s t} = \sum_{a, b=1}^{m}\kappa_{a b} \partial_{b s} \partial_{a t}$
and $(\partial_{\bar 1})^2_{s t} = \sum_{a, b=m+1}^{m+2n}\kappa_{a b} \partial_{b s} \partial_{a t}$.
Then $\partial^2_{s t} =(\partial_{\bar 0})^2_{s t} +(\partial_{\bar 1})^2_{s t}$.

\subsection{Invariants of the orthosymplectic supergroup}\label{subsect:ivariants}
In this section, we study the subalgebra of ${\OSp(V)}$-invariants in $\cS(N)$
from the above point of view.
Denote by $\cS(N)^{\osp(V)}$ the subalgebra of $\osp(V)$-invariants
in $\cS(N)$,
and by $\cS(N)^{\OSp(V)}$ that of the $\OSp(V)$-invariants.
Then $\cS(N)^{\osp(V)}$ is stable under the action of $\OSp(V)_0$.
Further, the kernel of  $\det:\OSp(V)_0\to\{\pm 1\}$, which is just $\SOSp(V)_0:=\SO(V_{\bar0})\times\Sp(V_{\bar1})$
 is contained in the enveloping algebra of the
$\osp(V)$-action on $\cS(N)$, and therefore acts trivially on $\cS(N)^{\osp(V)}$.

It follows \cite[Appendix F]{FH} that $\OSp(V)_0$ acts on the invariants of $\osp(V)$,
and we have
\begin{eqnarray}
\cS(N)^{\osp(V)}= \cS(N)^{\OSp(V)}\oplus \cS(N)^{\OSp(V), \det},
\end{eqnarray}
where
$
\cS(N)^{\OSp(V), \det} = \{f\in \cS(N)^{\osp(V)} \mid g(f)=\det(g) f, \ \forall g\in \OSp(V)_0\};
$
this space will be referred to as the space of pseudo invariants of $\OSp(V)$.
Note that both $\cS(N)^{\OSp(V)}$  and $\cS(N)^{\OSp(V), \det}$ are  semi-simple $\gl_N$-submodules of $\cS(N)$.

\begin{remark}\label{rem:elmt-invs} For $i, j=1, 2, \dots, N$, we have defined in \eqref{eq:qij} the quadratic
elements $q_{i j}$ of $\cS(N)$.
It follows from the FFT of invariant theory for the orthosymplectic supergroup \cite{LZ6}
that the subalgebra $\cS(N)^{\OSp(V)}$ is generated as superalgebra by these quadratic elements $q_{i j}$,
which will be referred to as the elementary $\OSp(V)$-invariants.
\end{remark}

It follows from the block nature of $\kappa$ and \eqref{eq:qij} that the quadratic $\OSp(V)$-invariants $q_{ij}$
may be expressed as $q_{ij}=p_{ij}+\phi_{ij}$, where

\begin{eqnarray}\label{eq:quadratics}
p_{i j} = \sum_{k=1}^m x^k_i  x^k_j, & \phi_{i j}=\sum\limits_{\mu, \nu=1}^{2n}
\theta_i^\mu(\eta^{-1})_{\mu \nu}\theta_j^\nu.
\end{eqnarray}
A simple computation shows that
$p_{i j}$ and $\phi_{i j}$ are $\OSp(V)_0$-invariant, and  that $J_{a b}(q_{i j})=0$ when $[a]=[b]$ (i.e. for the even operators $J_{ab}$).

Fix $N>0$, and for each $k$ with $1\le k\le N$, we form the $k\times k$-matrices
\[
\begin{aligned}
P(k)&=(p_{i j})_{i, j=1}^k, \quad
\Phi(k)&=(\phi_{i j})_{i, j=1}^k,  \quad
Q(k)&=P(k)+\Phi(k),
\end{aligned}
\]
and define $D(k)=\det Q(k)$.  Let $\tilde{Q}(k)$ be the adjugate matrix
of $Q(k)$, that is, $(-1)^{i+j}\tilde{Q}(k)_{j i}$ is the determinant of
the matrix obtained by removing the $i$-th row and $j$-th column from $Q(k)$.

\begin{lemma}
Assume that $1\le k\le N$ and $1\le j\le k$.  Then
\begin{eqnarray}
\partial_{a j}(D(k))  &=&  2  \sum_{r=1}^k  X_{a r} \tilde{Q}(k)_{r j}, \quad 1\le a\le m+2n, \label{eq:D-formulae-1} \\
\partial^2_{j j}(D(k)) &=&2(m-2n-k+1)\tilde{Q}_{j j},  \label{eq:D-formulae-2}
\end{eqnarray}
\begin{eqnarray}
\partial^2_{k k}(D(k)^\ell) &=& 2\ell(m-2n-k+2\ell-1)D(k)^{\ell-1}D(k-1), \label{eq:Laplace}
\end{eqnarray}
\begin{eqnarray}
E_{i j} (D(k))&=& 2  \sum_{r=1}^k q_{i r} \tilde{Q}(k)_{r j}, \quad 1\le i\le N,  \label{eq:ED}\\
D(k+1) &=& q_{k+1, k+1} D(k) - \frac{1}{2} \sum_{r=1}^k q_{r, k+1} E_{k+1, r}(D(k)). \label{eq:reduce}
\end{eqnarray}
\end{lemma}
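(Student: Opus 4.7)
My plan is to prove the five identities in the order $\eqnref{eq:D-formulae-1} \Rightarrow \eqnref{eq:ED} \Rightarrow \eqnref{eq:reduce} \Rightarrow \eqnref{eq:D-formulae-2} \Rightarrow \eqnref{eq:Laplace}$; in particular, I will deduce the Laplacian identity $\eqnref{eq:D-formulae-2}$ from the recursion $\eqnref{eq:reduce}$, which sidesteps any direct manipulation of double minors or Jacobi-type identities.

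First, I would verify the basic formula
\[
\partial_{aj} q_{rs} \;=\; \delta_{jr}X_{as} + \delta_{js}X_{ar}
\]
by a direct super-Leibniz calculation on $q_{rs}=\sum_{c,d}(\kappa^{-1})_{cd}X_r^c X_s^d$, where the supersymmetry $(\kappa^{-1})_{ca}=(-1)^{[a]}(\kappa^{-1})_{ac}$ cancels the sign coming from commuting $\partial_{aj}$ past $X_r^c$. Because all $q_{rs}$ are even and commute, the expansion $D(k)=\sum_{\sigma\in S_k}\mathrm{sgn}(\sigma)\prod_{i}q_{i,\sigma(i)}$ is unambiguous, and combining the Leibniz rule with cofactor expansions along row $j$ and column $j$ gives
\[
\partial_{aj}D(k) \;=\; \sum_s X_{as}\tilde Q(k)_{sj} + \sum_r X_{ar}\tilde Q(k)_{jr}.
\]
Since $Q(k)$ is symmetric with even entries, $\tilde Q(k)$ is symmetric too, so the two sums coincide and yield $\eqnref{eq:D-formulae-1}$. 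Identity $\eqnref{eq:ED}$ then follows at once from $E_{ij}=\sum_a X_i^a\partial_{aj}$ together with the elementary identity $\sum_a X_i^a X_{ar}=q_{ir}$. For $\eqnref{eq:reduce}$, a standard Schur-complement computation on the block form
\[
Q(k+1) \;=\; \begin{pmatrix} Q(k) & v \\ v^{\tr} & q_{k+1,k+1}\end{pmatrix},\qquad v=(q_{1,k+1},\dots,q_{k,k+1})^{\tr},
\]
yields $D(k+1)=q_{k+1,k+1}D(k) - v^{\tr}\tilde Q(k) v$; substituting $v^{\tr}\tilde Q(k) v = \tfrac{1}{2}\sum_r q_{r,k+1} E_{k+1,r}(D(k))$ via $\eqnref{eq:ED}$ delivers $\eqnref{eq:reduce}$.

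The Laplacian identity $\eqnref{eq:D-formulae-2}$ is the main obstacle; by the $S_k$-symmetry of $D(k)$ under simultaneous permutation of its row and column indices it suffices to treat $j=k$, and I would then apply $\partial_{kk}^2$ directly to the recursion $D(k)=q_{kk}D(k-1)-\tfrac{1}{2}\sum_{r<k}q_{r,k}E_{k,r}(D(k-1))$ furnished by $\eqnref{eq:reduce}$. Since both $D(k-1)$ and $\tilde Q(k-1)$ are free of the $X_k$-variables, the problem reduces to two bilinear calculations: $\partial_{kk}^2 q_{kk}=2(m-2n)$, which rests on $\sum_{a,b}\kappa_{ab}(\kappa^{-1})_{ab}=m-2n$, and $\partial_{kk}^2(q_{r,k}q_{k,s})=2q_{rs}$ for $r,s<k$, the latter based on the contraction $\sum_{a,b}\kappa_{ab} X_{bs}X_{ar}=q_{rs}$ again derived from the supersymmetry of $\kappa$. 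Inserting $E_{k,r}(D(k-1))=2\sum_s q_{k,s}\tilde Q(k-1)_{sr}$ from $\eqnref{eq:ED}$ and the cofactor identity $\sum_s q_{rs}\tilde Q(k-1)_{sr}=D(k-1)$ yields $\partial_{kk}^2\bigl[\sum_r q_{r,k}E_{k,r}(D(k-1))\bigr]=4(k-1)D(k-1)$, and assembling gives $\partial_{kk}^2 D(k)=2(m-2n-k+1)D(k-1)=2(m-2n-k+1)\tilde Q(k)_{kk}$.

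Finally, to establish $\eqnref{eq:Laplace}$ I would apply the Leibniz rule twice to $D(k)^\ell$, obtaining
\[
\partial_{kk}^2 D(k)^\ell \;=\; \ell(\ell-1)D(k)^{\ell-2}\sum_{a,b}\kappa_{ab}(\partial_{ak}D(k))(\partial_{bk}D(k)) + \ell D(k)^{\ell-1}\partial_{kk}^2 D(k).
\]
The second summand is handled directly by $\eqnref{eq:D-formulae-2}$. For the cross term, substituting $\eqnref{eq:D-formulae-1}$, using that the entries of $\tilde Q(k)$ are even and commute freely, and once more invoking $\sum_{a,b}\kappa_{ab} X_{bs}X_{ar}=q_{rs}$ reduces it to $4\sum_{r,s}q_{rs}\tilde Q(k)_{sk}\tilde Q(k)_{rk}$; applying the cofactor identity $(Q(k)\tilde Q(k))_{rk}=D(k)\delta_{rk}$ collapses this to $4D(k)D(k-1)$. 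Combining the two pieces yields $2\ell(m-2n-k+2\ell-1)D(k)^{\ell-1}D(k-1)$, as claimed.
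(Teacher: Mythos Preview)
Your proof is correct. For \eqnref{eq:D-formulae-1}, \eqnref{eq:ED}, \eqnref{eq:reduce} and \eqnref{eq:Laplace} your argument is essentially the paper's: direct differentiation of the $q_{rs}$ for \eqnref{eq:D-formulae-1}, the contraction $\sum_a X_i^a X_{ar}=q_{ir}$ for \eqnref{eq:ED}, a cofactor/Schur--complement expansion of $D(k+1)$ for \eqnref{eq:reduce}, and the Leibniz expansion of $D(k)^\ell$ together with the cross-term identity $\sum_{a,b}\kappa_{ba}(\partial_{ak}D(k))(\partial_{bk}D(k))=4D(k)D(k-1)$ for \eqnref{eq:Laplace}.

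The one place you take a genuinely different route is \eqnref{eq:D-formulae-2}. The paper applies $\sum_{b}\kappa_{ab}\partial_{bj}$ directly to \eqnref{eq:D-formulae-1}, obtaining
\[
\partial_{jj}^2 D(k)=2(m-2n)\tilde Q(k)_{jj}+2\sum_{r=1}^k E_{rj}\bigl(\tilde Q(k)_{rj}\bigr),
\]
and then invokes the auxiliary cofactor identities $E_{jj}(\tilde Q(k)_{jj})=0$ and $E_{rj}(\tilde Q(k)_{rj})=-\tilde Q(k)_{jj}$ for $r\ne j$. Your approach instead feeds the recursion \eqnref{eq:reduce} (shifted down by one) back into the Laplacian, reducing the computation to the two elementary evaluations $\partial_{kk}^2 q_{kk}=2(m-2n)$ and $\partial_{kk}^2(q_{r,k}q_{s,k})=2q_{rs}$, and then appeals to the $S_k$-symmetry of $D(k)$ to pass from $j=k$ to general $j$. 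The trade-off is clear: the paper's argument handles all $j$ at once but relies on knowing how $E_{rj}$ acts on individual cofactors, whereas yours avoids any cofactor manipulation at the cost of establishing \eqnref{eq:reduce} first and treating only $j=k$ directly.
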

\begin{proof}
The proof of these formulae is straightforward, so we provide a sketch, leaving details to the reader.
Equation \eqref{eq:D-formulae-1} is easily verified by direct computation.
Applying
$\sum_{b=1}^{m+2n}\kappa_{a b}\partial_{b j}$ to \eqref{eq:D-formulae-1} and summing over
$a$ from $1$ to $m+2n$, we obtain
\[
\partial^2_{j j}(D(k)) =2(m-2n)\tilde{Q}_{j j} + 2 \sum_{r=1}^k E_{r j}(\tilde{Q}(k)_{r j}).
\]
This leads to \eqref{eq:D-formulae-2} taking into account the following formulae:
\begin{eqnarray}\label{eq:EQ}
E_{j j}(\tilde{Q}(k)_{j j})=0, \quad E_{r j}(\tilde{Q}(k)_{r j})=-\tilde{Q}(k)_{j j}, \  \forall r\ne j.
\end{eqnarray}
Equation \eqref{eq:D-formulae-1} gives
$
\sum_{a, b=1}^{m+2n}\kappa_{b a}\partial_{a k}(D(k))  \partial_{b k}(D(k)) = 4D(k)D(k-1).
$
This together with \eqref{eq:D-formulae-2} yields \eqref{eq:Laplace}.
Equation \eqref{eq:ED} follows from \eqref{eq:D-formulae-1} immediately.
For any $r\le k$, we have
$
\tilde{Q}(k+1)_{r, k+1} =-\sum_{s=1}^k q_{k+1, s} \tilde{Q}(k)_{s r},
$
and it then follows from \eqref{eq:ED} that
$
\tilde{Q}(k+1)_{r, k+1}= -\frac{1}{2} E_{k+1, r}(D(k)).
$
Using this in the Laplace expansion of $D(k+1)$ along its last column,
we arrive at \eqref{eq:reduce}.
\end{proof}

It follows from \eqref{eq:ED} that
\[
\begin{aligned}
E_{i j}(D(k))=\left\{\begin{array}{l l l l}
0, 		& 1\le i<j\le N, \\
2 D(k),  	& 1\le i=j\le k, \\
0,       	& i=j> k.
\end{array}\right.
\end{aligned}
\]
Therefore,  each $D(k)$ is a $\gl_N$-highest weight vector with weight $2\omega_k$,
where $\omega_r=(\underbrace{1, \dots, 1}_r, 0, \dots, 0)$ is the $r^{\text th}$ fundamental weight of $\gl_N$.

Given any $N$-tuple of nonnegative integers $(\ell_1, \ell_2, \dots, \ell_N)$, we let
\[
\lambda=(\lambda_1, \dots, \lambda_N)=2\sum_{i=1}^N\ell_i\omega_i.
\]
It is clear that such $\lambda$ comprise all even partitions  of length $\le N$
(i.e., those where all rows have even length).  Define
\begin{eqnarray}\label{eq:Dlambda}
D_\lambda:=\prod_{i=1}^N D(i)^{\ell_i}.
\end{eqnarray}
Since the operators $E_{i j}$ are derivations of $\cS(N)$, the element $D_\lambda$,
if nonzero, is a highest weight vector of $\gl_N$ with weight $\lambda$. We next have

\begin{lemma}\label{lem:hw-vectors}
If $\lambda_{m+1}\le 2n$, then $D_\lambda\ne 0$.
\end{lemma}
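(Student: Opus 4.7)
The plan is to induct on $k_0(\lambda) := \max\{k > m : \ell_k > 0\}$, with the convention that $k_0$ does not exist if this set is empty. The key tool is the Laplacian identity \eqref{eq:Laplace}. In the base case, when $k_0$ does not exist, we have $D_\lambda = \prod_{i=1}^m D(i)^{\ell_i}$. I would apply the leading term map $\pi$ of \eqref{eq:pi}: by Remark~\ref{rem:pi}, $\pi$ kills odd variables, so $\pi(q_{ij}) = p_{ij}$ and hence $\pi(D(i)) = \det P(i)$ for $i \le m$. The product $\pi(D_\lambda) = \prod_{i=1}^m \det P(i)^{\ell_i}$ is a nonzero polynomial in the $x^a_t$ (each $\det P(i)$ is a nonzero polynomial for $i \le m$, e.g.\ by Cauchy--Binet, and the polynomial ring is an integral domain), which gives $D_\lambda \ne 0$.

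For the inductive step, I will apply $(\partial^2_{k_0 k_0})^{\ell_{k_0}}$ to $D_\lambda$. The hypothesis $\lambda_{m+1} \le 2n$ forces $\ell_{k_0} \le \sum_{j > m}\ell_j \le n$. Since $D(j)$ for $j < k_0$ does not involve the variables $X^a_{k_0}$, and $\ell_j = 0$ for $j > k_0$ by the maximality of $k_0$, the operator acts only on the factor $D(k_0)^{\ell_{k_0}}$. Iterating \eqref{eq:Laplace} yields
\[
(\partial^2_{k_0 k_0})^{\ell_{k_0}} D_\lambda = C \cdot D_{\lambda'}, \qquad
C = 2^{\ell_{k_0}}\ell_{k_0}!\prod_{s=1}^{\ell_{k_0}}(m - 2n - k_0 + 2s - 1),
\]
where $\lambda'$ is obtained from $\lambda$ by the substitutions $\ell'_{k_0} = 0$ and $\ell'_{k_0 - 1} = \ell_{k_0 - 1} + \ell_{k_0}$. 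Each factor in $C$ is bounded above by $m - k_0 - 1 < 0$ (using $s \le \ell_{k_0} \le n$ and $k_0 > m$), so $C$ is a nonzero scalar. One checks that $\lambda'_{m+1} \le 2n$ still holds (either $\lambda'_{m+1} = \lambda_{m+1}$, or $\lambda'_{m+1} = \lambda_{m+1} - 2\ell_{k_0}$ in the case $k_0 = m+1$) and that $k_0(\lambda') < k_0$ (or does not exist). By the inductive hypothesis, $D_{\lambda'} \ne 0$, and hence $D_\lambda \ne 0$.

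The principal technical task is to derive the iterated Laplacian formula, which is proved by an easy induction on the exponent using that $\partial^2_{k_0 k_0}$ acts as a derivation on products and annihilates $D(k_0 - 1)$. A subtle point is the choice of induction parameter: the seemingly natural quantity $\#\{k > m : \ell_k > 0\}$ need not strictly decrease after one reduction (if $\ell_{k_0 - 1}$ was previously zero, merging $\ell_{k_0}$ into $\ell_{k_0 - 1}$ just shifts the support), whereas $k_0$ itself always decreases. Verifying the size estimate on $C$ and the preservation of the hook condition for $\lambda'$ are then routine arithmetic.
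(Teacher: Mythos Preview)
Your proof is correct and takes essentially the same approach as the paper's: iterate the Laplacian identity \eqref{eq:Laplace} to reduce $D_\lambda$ (up to a nonzero constant) to a product of $D(i)$'s with $i\le m$, then verify nonvanishing via a specialization---you use the leading term map $\pi$, while the paper introduces the homomorphism $R$ of \eqref{eq:specialization-map}; the paper applies all the Laplacians at once as $\nabla^{(\lambda)}=\prod_{k>m}(\partial_{kk}^2)^{\lambda_k/2}$, whereas you peel them off one at a time by induction on $k_0$, but the underlying computation and the nonvanishing estimate on the constant are identical. One phrasing quibble: $\partial^2_{k_0k_0}$ is a second-order operator, not a derivation; your iteration works because $D(k_0-1)$ is independent of the variables $X^a_{k_0}$, so the Laplacian passes through it as through a constant factor.
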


{
\begin{proof} The superalgebra $\cS(N)$ is freely generated as supercommutative superalgebra by
the even generators $X^i_t$ ($1\leq i\leq m$) and the odd generators $X^i_t$ ($m+1\leq i\leq m+2n$).
We may therefore consider the homomorphism of supercommutative superalgebras
\begin{eqnarray}\label{eq:specialization-map}
R: \cS(N)\longrightarrow \Lambda_{N-m}:=\langle \{\theta_t^j \mid 1\leq j\leq 2n,\; m+1\leq t\leq N\} \rangle
\end{eqnarray}
defined by sending the generators, arranged in the matrix $\X$, (see \eqref{eq:matqij}) to the elements of the matrix
$$
\bordermatrix{&m&2n&\cr
                m&I_m & 0\cr
                N-m& 0 &  \Theta\cr},
$$
where
$$
\Theta=\begin{pmatrix}
\theta_{m+1}^1&.&.&.&\theta_{m+1}^{2n}\\
\theta_{m+2}^1&.&.&.&\theta_{m+2}^{2n}\\
&.&.&.&\\
&.&.&.&\\
\theta_{N}^1&.&.&.&\theta_{N}^{2n}\\
\end{pmatrix}.
$$

To investigate the image of the matrix $Q(k)$ under $R$,
set
\begin{eqnarray}\label{eq:Psi}
\Psi(s)=\begin{pmatrix}
\phi_{m+1, m+1} & \phi_{m+1, m+2}&\dots & \phi_{m+1, s} \\
\phi_{m+2, m+1} & \phi_{m+2, m+2}&\dots & \phi_{m+2, s} \\
\dots&\dots&\dots&\dots\\
\phi_{s, m+1} & \phi_{s, m+2}&\dots & \phi_{s s}
\end{pmatrix}.
\end{eqnarray}
Then
$R(Q(k))=I_k$ if $k\le m$, and
$R(Q(s))= \begin{pmatrix}I_m & 0 \\ 0 & \Psi(s)\end{pmatrix}$
if $s>m$. In particular, $R(D(k))=1$ for $k\le m$.
We will prove the theorem by examining properties of $D_\lambda$ under the map $R$.

Let  $d$ be the length of $\lambda$ (i.e., the largest $i$ such that $\ell_i>0$).
If $d\le m$, we have $R(D_\lambda)=1$.
If $d>m$, let us consider $\nabla^{(\lambda)}(D_\lambda)$, where
\begin{eqnarray}\label{eq:laplace-power}
\nabla^{(\lambda)}=\prod_{k=m+1}^N\left(\partial_{k k}^2\right)^{\lambda_k/2}.
\end{eqnarray}
By iterating \eqref{eq:Laplace}, we obtain the formula
\begin{eqnarray}\label{eq:truncation}
(\partial^2_{k k})^\ell(D(k)^\ell) =C(k, \ell) D(k-1)^{\ell},
\end{eqnarray}
where
\begin{eqnarray}\label{eq:coeff}
C(k, \ell) = \prod_{j=1}^\ell 2j(m-2n-k+2j-1),
\quad \text{with \ \ }
C(k, 0)=1.
\end{eqnarray}
This leads to
\begin{eqnarray}\label{eq:truncating-D}
\nabla^{(\lambda)}(D_\lambda)= C(\lambda) D(m)^{\frac{\lambda_m}{2}}\prod_{i=1}^{m-1} D(i)^{\ell_i},  \  \ C(\lambda)=\prod_{k=m+1}^N C(k, \frac{\lambda_k}{2}).
\end{eqnarray}
We note crucially that
$C(k, \ell)\ne 0$,  if $k>m$ and $\ell\le n$.
Thus
\begin{eqnarray}\label{eq:coeff-C}
C(\lambda)\ne 0,  \  \text{as $\lambda_k/2\le \lambda_m/2\le n$ for all $k>m$}.
\end{eqnarray}
Now $R(\nabla^{(\lambda)}(D_\lambda))= C(\lambda)$, and hence $D_\lambda\ne 0$.

This completes the proof.
\end{proof}
}

With the help of Lemma \ref{lem:hw-vectors}, we can decompose $\cS(N)^{\OSp(V)}$
with respect to the $\gl_N$-action.
\begin{theorem}\label{lem:hwv-even}
The subalgebra of $\OSp(V)$-invariants in $\cS(N)$ has the following multiplicity free decomposition
as a $\gl_N$-module
\[
\cS(N)^{\OSp(V)}=\bigoplus_\lambda L_\lambda,
\]
where $L_\lambda$ is the simple $\gl_N$-module with highest weight $\lambda$,
and the sum is over all partitions $\lambda$ of length $\le N$ all of whose parts are even and satisfy
the condition $\lambda_{m+1}\le 2n$.  Furthermore, for each $\lambda$, the element
$D_{\lambda}$ defined in \eqref{eq:Dlambda} is a highest weight vector of
$L_\lambda$.
\end{theorem}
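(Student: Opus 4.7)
The plan is to combine the FFT for $\OSp(V)$ recalled in Remark \ref{rem:elmt-invs}, the Howe duality \eqref{eq:Howe}, and Lemma \ref{lem:hw-vectors}. As a first step I would note that each $q_{ij}$ is an even element of $\cS(N)$, and using the block structure of $\kappa$ (i.e.\ that $\eta\inv$ is skew and the $\theta^\mu_t$ anticommute) I would verify $q_{ij}=q_{ji}$. Consequently the $q_{ij}$ mutually commute, and their linear span is a $\gl_N$-submodule of $\cS(N)^{\OSp(V)}$ isomorphic to $\Sym^2\C^N$.

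Next, because the $q_{ij}$ generate $\cS(N)^{\OSp(V)}$ as a commutative algebra (by the FFT), the inclusion of their span extends to a surjective $\gl_N$-equivariant algebra homomorphism
$$
\Phi:\Sym(\Sym^2\C^N)\twoheadrightarrow \cS(N)^{\OSp(V)}.
$$
Invoking the classical multiplicity-free decomposition
$$
\Sym(\Sym^2\C^N)=\bigoplus_{\lambda\text{ even},\,\ell(\lambda)\le N}L_\lambda,
$$
I conclude that $\cS(N)^{\OSp(V)}$ is a $\gl_N$-module that is itself multiplicity free, and each simple submodule appearing is an $L_\lambda$ indexed by some even partition of length $\le N$.

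It remains to identify exactly which such $\lambda$ occur, which I would do by two matching inclusions. On the one hand, Howe duality \eqref{eq:Howe} shows that every $L_\mu$ occurring in the ambient $\cS(N)$ (and hence in $\cS(N)^{\OSp(V)}$) satisfies $\mu_{m+1}\le 2n$, so only even $\lambda$ with $\lambda_{m+1}\le 2n$ can appear. On the other hand, for each such $\lambda$, Lemma \ref{lem:hw-vectors} furnishes $D_\lambda\ne 0$; being a polynomial in the $q_{ij}$'s it lies in $\cS(N)^{\OSp(V)}$, and the computations following \eqref{eq:ED} show it is a $\gl_N$-highest weight vector of weight $\lambda$. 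This forces the corresponding $L_\lambda$ to occur, and multiplicity-freeness then guarantees that $D_\lambda$ is, up to scalar, the highest weight vector of this component.

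The main obstacle I anticipate is justifying that $\cS(N)^{\OSp(V)}$ inherits its $\gl_N$-decomposition from $\Sym(\Sym^2\C^N)$; once the commutativity of the even invariants $q_{ij}$ and the surjectivity supplied by the FFT are in hand, the existence side is handled by Lemma \ref{lem:hw-vectors} and the restriction side is immediate from Howe duality, so the two bounds meet exactly on the claimed set of partitions.
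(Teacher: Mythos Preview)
Your proposal is correct and follows essentially the same route as the paper: both use the FFT to realise $\cS(N)^{\OSp(V)}$ as a $\gl_N$-equivariant quotient of $S(S^2(\C^N))$, invoke the classical multiplicity-free decomposition of the latter, bound the occurring $\lambda$ via Howe duality, and use Lemma~\ref{lem:hw-vectors} to show every admissible $L_\lambda$ actually appears with $D_\lambda$ as highest weight vector. Your write-up is slightly more explicit about why the $q_{ij}$ are symmetric and commuting, but the argument is the same.
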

\begin{proof} It follows from \eqref{eq:Howe} that
$
\cS(N)^{\OSp(V)}=\oplus_\mu (V^\mu)^{\OSp(V)}\otimes L_\mu.
$
By the FFT for ${\OSp(V)}$ \cite{LZ6}, $\cS(N)^{\OSp(V)}$
is generated as superalgebra by the elementary invariants $q_{i j}$ ($1\le i, j\le N$).
Then the linear span of the elementary invariants  is a simple $\gl_N$-module isomorphic to $S^2(\C^N)$.
Thus $\cS(N)^{\OSp(V)}$ is a quotient of
$S(S^2(\C^N) )$, the symmetric algebra over $S^2(\C^N)$.
It is well known (see, e.g., \cite[\S 7.8, Example (c)]{KP}) that
$S(S^2(\C^N) )$  is
multiplicity free and contains all the simple modules associated
with even partitions of length $\le N$.
But it follows from Lemma \ref{lem:hw-vectors} that if $\mu$ is an even partition
such that $V^\mu$ appears as a submodule of $\cS(N)$,
then $\dim  (V^\mu)^{\OSp(V)}=1$. This proves the first assertion, while
the second one has already been proved.
\end{proof}

Having understood the invariants of $\OSp(V)$, we now turn to the study of the space
of pseudo invariants of the orthosymplectic supergroup.

\section{Constructions of a super Pfaffian} \label{sect:pfaffian}

Throughout this section, we take $N$ of the last section to be equal to $m$, and write $\cS=\cS(m)$,
$P=P(m)$, $Q=Q(m)$ and $\Phi=\Phi(m)$.   In this case, we have the
$m\times m$-matrix $X=(x^i_j)$, and it is clear that $P=X^t X$.
Let $\Delta=\det X$.

\subsection{First definition of a super Pfaffian}

Let $\cS[\Delta^{-1}]$ be the localisation  of $\cS$ at $\Delta$.
The leading term homomorphism $\pi$ (see \eqref{eq:pi}, Remark \ref{rem:pi}) extends uniquely to a superalgebra homomorphism
$
\pi: \cS[\Delta^{-1}]\longrightarrow S((V^m)_{\bar0})[\Delta^{-1}],
$
which we still denote by $\pi$, and which respects the $\OSp(V)_0$-action defined below.
\begin{lemma}
The $\OSp(V)$ action on $\cS$ extends uniquely to the localisation $\cS[\Delta^{-1}]$. The action is described explicitly as follows.
Let $Y\in \osp(V)$, $g\in \OSp(V)_0$ and  $\frac{f}{\Delta^k}\in\cS[\Delta\inv]$, with $f\in\cS$. Then
\begin{eqnarray}
&&Y\left(\frac{f}{\Delta^k}\right)= \frac{Y(f)}{\Delta^k}- k  \frac{Y(\Delta)f}{\Delta^{k+1}}, \label{eq:act1}\\
&&g\left(\frac{f}{\Delta^k}\right)=\det(g)^{-k} \frac{g(f)}{\Delta^k}.  \label{eq:act2}
\end{eqnarray}
\end{lemma}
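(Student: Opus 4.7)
The plan is to reduce the statement to the universal property of localization, after first establishing how $\Delta$ transforms under $\OSp(V)_0$. Since $\Delta=\det X$ involves only the even variables $x^i_j$, the symplectic factor $\Sp(V_{\bar1})$ fixes $\Delta$, while the orthogonal factor $\Or(V_{\bar0})=\Or(m)$ acts linearly on the columns of $X$, giving $g(\Delta)=\det(g)\Delta$ for every $g\in\OSp(V)_0$, where $\det(g)\in\{\pm1\}$ is the character introduced earlier. In particular $g(\Delta)$ is a unit in $\cS[\Delta^{-1}]$.

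For the group action, the composite $\cS\xrightarrow{g}\cS\hookrightarrow\cS[\Delta^{-1}]$ is a superalgebra map sending $\Delta$ to the unit $\det(g)\Delta$, so by the universal property of localization it extends uniquely to a superalgebra endomorphism of $\cS[\Delta^{-1}]$; applying the same argument to $g^{-1}$ shows the extension is an automorphism. Then $g(\Delta^{-k})=(g\Delta)^{-k}=\det(g)^{-k}\Delta^{-k}$, and combining this with $g(f/\Delta^k)=g(f)\cdot g(\Delta^{-k})$ yields \eqref{eq:act2}.

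For the Lie algebra action, I would argue that any (super)derivation $Y$ of $\cS$ has a unique extension as a superderivation to $\cS[\Delta^{-1}]$, crucially using that $\Delta$ is even. Applying the (super) Leibniz rule to $\Delta\cdot\Delta^{-1}=1$ forces $Y(\Delta^{-1})=-\Delta^{-2}Y(\Delta)$, and an easy induction gives $Y(\Delta^{-k})=-k\Delta^{-k-1}Y(\Delta)$ for all $k\ge1$; the parities work out because $\Delta^{-1}$ is even, so powers of $\Delta$ commute with everything and no sign arises when $Y$ is odd. Expanding $f/\Delta^k=f\cdot\Delta^{-k}$ by Leibniz then produces \eqref{eq:act1}, and uniqueness of the extension follows because $\cS$ together with $\Delta^{-1}$ generates $\cS[\Delta^{-1}]$ as a superalgebra.

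The main (minor) obstacle is well-definedness: the right-hand side of \eqref{eq:act1} must be independent of the representation of an element as $f/\Delta^k$. This is guaranteed by the universal-property argument, but it can also be checked directly by applying $Y$ to the identity $f\Delta^{k'}=f'\Delta^k$ whenever $f/\Delta^k=f'/\Delta^{k'}$, and rearranging using that $\Delta$ is central and invertible.
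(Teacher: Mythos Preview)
Your proof is correct and follows essentially the same route as the paper's: both extend the $\osp(V)$-action to $\cS[\Delta^{-1}]$ by using that the $J_{ab}$ act as $\Z_2$-graded derivations, and extend the $\OSp(V)_0$-action by multiplicativity together with $g(\Delta)=\det(g)\Delta$. Your version is considerably more detailed --- spelling out the universal property of localization, the induction for $Y(\Delta^{-k})$, the parity check for odd $Y$, and the well-definedness argument --- whereas the paper dispatches these points in two sentences.
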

\begin{proof}
The elements of the Lie superalgebra $\osp(V)$ as realised by \eqref{eq:diff-op} act naturally on
$\cS[\Delta^{-1}]$ as $\Z_2$-graded derivations, and formula \eqref{eq:act1} follows.
The extended $\OSp(V)_0$-action must satisfy $g(fh)=g(f)g(h)$, for $f,h\in\cS[\Delta\inv]$. Thus
\[
g(f)=g\left(\Delta^k \frac{f}{\Delta^k}\right)=g(\Delta^k ) g\left(\frac{f}{\Delta^k}\right)=\det(g)^k \Delta^k g\left(\frac{f}{\Delta^k}\right),
\]
which yields \eqref{eq:act2}.
\end{proof}

Since $\pi(\det(Q))=\Delta^2$,  $\pi\left(\frac{\det Q}{\Delta^2}\right)=1$. Hence there is an element
$\zeta\in \ker(\pi)$ such that
$
\det Q = \Delta^2 +\zeta,
$
and $\zeta$ is nilpotent.  Therefore,  we may define an element $F\in\cS[\Delta^{-1}]$
such that $F^2=\frac{\det Q}{\Delta^2}$ as a {\em Taylor polynomial} in $\frac{\zeta}{\Delta^2}$:
\begin{eqnarray}\label{eq:F}
\begin{aligned}
F:=& \sqrt\frac{\det Q}{\Delta^2}=
\sqrt{1+\frac{\zeta}{\Delta^2}}.
\end{aligned}
\end{eqnarray}
An elementary computation leads to
\[
F=1+\sum_{k\ge 1} \frac{1}{k!}\prod_{j=0}^{k-1} \left(\frac{1}{2}-j\right)
\cdot \left(\frac{\zeta}{\Delta^2}\right)^{k}.
\]
\begin{lemma}\label{lem:sqrt-inv}
The element $\Delta F$ is a pseudo invariant for $\OSp(V)$. Specifically,
for any $Y\in\osp(V)$ and $g\in\OSp(V)_0$,  we have
\[
Y(\Delta F)=0, \text{ and }\quad g(\Delta F)=\det(g) \Delta F.
\]
\end{lemma}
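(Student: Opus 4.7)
The strategy is to exploit the defining identity $(\Delta F)^2 = \det Q$ in $\cS[\Delta^{-1}]$, which reduces both statements to known properties of $\det Q$ and $\Delta$. I would first record that $\Delta F$ is of even parity (because $\det Q$ and $\Delta^2$ are both even, hence so is $\zeta$, so each summand $(\zeta/\Delta^2)^k$ in the Taylor polynomial for $F$ is even) and that $\Delta F$ is a unit in $\cS[\Delta^{-1}]$, since $\Delta F = \Delta(1+\eta)$ with $\eta := F-1$ nilpotent, and both factors are invertible.

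For the $\osp(V)$-invariance, I would invoke the FFT for $\OSp(V)$ (Remark \ref{rem:elmt-invs}) to conclude that each $q_{ij}$, and hence $\det Q$, is annihilated by $\osp(V)$. For any homogeneous $Y \in \osp(V)$, the super-Leibniz rule applied to the even element $\Delta F$ gives
\[
0 \;=\; Y(\det Q) \;=\; Y\bigl((\Delta F)^2\bigr) \;=\; 2\,(\Delta F)\,Y(\Delta F),
\]
and multiplying by $(\Delta F)^{-1}$ yields $Y(\Delta F) = 0$.

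For the pseudo-invariance under $\OSp(V)_0$, the FFT gives $g(q_{ij}) = q_{ij}$, hence $g(\det Q) = \det Q$, for every $g \in \OSp(V)_0$. On the other hand $\Sp(V_{\bar 1})$ fixes the even coordinates $x^i_j$, while $\Or(V_{\bar 0})$ acts on $X = (x^i_j)$ through its defining representation on the first index, so $g(\Delta) = \det(g)\,\Delta$ and in particular $g(\Delta^2) = \Delta^2$. Consequently $g(\zeta) = g(\det Q) - g(\Delta^2) = \zeta$, and formula \eqref{eq:act2} yields $g(\zeta/\Delta^{2k}) = \det(g)^{-2k}\,\zeta/\Delta^{2k} = \zeta/\Delta^{2k}$ for each $k$; summing the Taylor polynomial gives $g(F) = F$, whence $g(\Delta F) = g(\Delta)\,g(F) = \det(g)\,\Delta F$. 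The only delicate point is the cancellation in the $\osp$ step, which relies on both the unit property and the even parity of $\Delta F$; neither presents a real obstacle.
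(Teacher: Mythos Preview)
Your argument is correct and follows essentially the same route as the paper: use $(\Delta F)^2=\det Q$ together with the super-Leibniz rule and the invertibility of $\Delta F$ to obtain $Y(\Delta F)=0$, and use the explicit Taylor polynomial for $F$ to deduce $g(F)=F$, whence $g(\Delta F)=\det(g)\,\Delta F$. The only slip is notational---in the last display you want $g(\zeta^{k}/\Delta^{2k})$ rather than $g(\zeta/\Delta^{2k})$---and invoking the FFT merely to know that the $q_{ij}$ are $\OSp(V)$-invariant is overkill, since this is established directly in the paper from the invariance of the co-evaluation map.
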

\begin{proof}
Since $\det Q$ is in  $\cS^{\OSp(V)}$,
it follows from the $\Z_2$-graded derivation property
of any $Y\in\osp(V)$ that
$
Y((\Delta F)^2)= (2\Delta F )Y(\Delta F)=Y(\det Q)=0;
$
since $\Delta F$ is invertible in $\cS[\Delta\inv]$, it follows that
$Y(\Delta F)=0$ in $\cS[\Delta^{-1}]$, i.e. that $\Delta F$ is $\osp(V)$-invariant.

>From the formula for $F$ given above, clearly $gF=F$.  It follows from \eqref{eq:act2}
that $g(\Delta F)= \det(g) \Delta F$ for any $g\in \OSp(V)_0$.
\end{proof}

We note that $\Delta F\not\in\cS$ unless $n=0$. In fact,
\begin{lemma}\label{lem:regular-singular}
If $k<n$, then $\Delta F (\det Q)^k$ does not belong to $\cS$.
However, there exists a smallest positive integer $N_0$ such that $\Delta F (\det Q)^{N_0+j}$
belongs to $\cS$ for all $j\in\Z_{\geq 0}$.
\end{lemma}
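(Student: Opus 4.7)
The plan rests on the expansion
\[
\Delta F \,(\det Q)^k \;=\; \Delta^{2k+1}\!\left(1+\tfrac{\zeta}{\Delta^2}\right)^{\!k+1/2} \;=\; \sum_{l=0}^{M}\binom{k+\tfrac12}{l}\,\zeta^l\,\Delta^{2(k-l)+1},
\]
obtained from the generalized-binomial series; it truncates at the smallest $M$ with $\zeta^{M+1}=0$, which exists because $\zeta\in\ker\pi$ is supported in strictly positive Grassmann degree and the exterior algebra on $(V^m)_{\bar1}$ is finite-dimensional. For the existence of $N_0$, note that for $k\ge M$ every exponent $2(k-l)+1$ is non-negative, so the expression lies in $\cS$; hence an $N_0\le M$ exists as the smallest positive integer with $\Delta F(\det Q)^{N_0}\in\cS$, and multiplication by further copies of $\det Q\in\cS$ preserves membership in $\cS$, giving $\Delta F(\det Q)^{N_0+j}\in\cS$ for all $j\ge 0$.

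For the non-membership assertion, I project onto the Grassmann top degree $2mn$ via the map $\pi_{\mathrm{top}}\colon\cS[\Delta\inv]\to \C[x^i_t][\Delta\inv]\cdot\varpi$, where $\varpi:=\prod_{\mu,t}\theta^\mu_t$ spans the one-dimensional top exterior piece. Decomposing $\zeta=A_1+\cdots+A_m$ into components of Grassmann degree $2s$ (so $A_m=\det\Phi$), the only way for $\zeta^l$ to reach total Grassmann degree $2mn$ is to pick $A_m$ from each factor, so $\pi_{\mathrm{top}}(\zeta^l)=0$ for $l<n$ and $\pi_{\mathrm{top}}(\zeta^n)=(\det\Phi)^n$. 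On the other hand, $\Delta F(\det Q)^k$ is a $\gl_N$-highest-weight vector of weight $(2k+1)\omega_m$: each of $\Delta,\det Q,\zeta$ is annihilated by every raising operator $E_{ij}$ ($i<j$), hence so is $F=\sqrt{1+\zeta/\Delta^2}$ by the derivation property applied to its Taylor polynomial in $\zeta/\Delta^2$. Since $\pi_{\mathrm{top}}$ is $\gl_N$-equivariant (each $E_{ij}$ preserves Grassmann degree), its image is a highest-weight vector of the same weight in $\C[x^i_t][\Delta\inv]\varpi$. Using that $\varpi$ has weight $2n\omega_m$ and $\Delta$ has weight $\omega_m$, such vectors are forced to be scalar multiples of $\Delta^{2k+1-2n}\varpi$, yielding
\[
\pi_{\mathrm{top}}\bigl(\Delta F(\det Q)^k\bigr) \;=\; C\,\Delta^{2k+1-2n}\,\varpi
\]
for some $C\in\C$; for $k<n$ the exponent is strictly negative, so the left-hand side lies in $\cS$ only if $C=0$.

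The main obstacle is therefore the subclaim $C\ne 0$. Applying the same highest-weight analysis to each $\zeta^l$ with $l\ge n$ gives $\pi_{\mathrm{top}}(\zeta^l)=C_l\,\Delta^{2(l-n)}\,\varpi$, whence extracting the coefficient from the expansion yields $C=\sum_{l\ge n}\binom{k+1/2}{l}\,C_l$ with $C_n\ne 0$ (since $(\det\Phi)^n$ is a non-zero multiple of $\varpi$). To check $C\ne 0$ for every integer $k$ with $0\le k<n$, I plan to specialize $x^i_t=\delta^i_t$, which sends $\Delta\mapsto 1$ and reduces $\Delta F(\det Q)^k$ to $\det(I_m+\Phi)^{k+1/2}$; then $C$ is the coefficient of $\varpi$ in this Grassmann polynomial, which I compute explicitly via the log-exp identity $\det(I+\Phi)^{k+1/2}=\exp\bigl(\tfrac{2k+1}{2}\tr\log(I+\Phi)\bigr)$ and exhibit as a polynomial in $k$ whose zeros lie outside $\Z_{\ge 0}$, as illustrated by $C=-2(2k+1)(k+1)$ in the model case $m=2$, $n=1$.
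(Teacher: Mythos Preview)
Your treatment of the existence of $N_0$ is essentially the same as the paper's: both use the nilpotency of $\zeta$ to truncate the binomial series at some $M$ and observe that for $k\ge M$ every $\Delta$-exponent is non-negative.

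For the non-membership when $k<n$, however, you take a genuinely different route from the paper and leave the decisive step unproved. You correctly reduce everything to the constant $C$ with $\pi_{\mathrm{top}}(\Delta F(\det Q)^k)=C\,\Delta^{2k+1-2n}\varpi$, but your expression $C=\sum_{l\ge n}\binom{k+1/2}{l}C_l$ is a sum with potentially many nonzero terms (all $l$ with $n\le l\le M$ contribute), and you give no argument that it does not vanish. Your ``plan'' to specialise $x^i_t=\delta^i_t$ and analyse $\det(I_m+\Phi)^{k+1/2}$ via $\exp(\tfrac{2k+1}{2}\tr\log(I_m+\Phi))$ still leaves you computing the top Grassmann coefficient of a complicated expression in the full exterior algebra on $2mn$ generators; the single worked example $(m,n)=(2,1)$ does not indicate why the resulting polynomial in $k$ should avoid integer roots in general. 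Even the preliminary claim $C_n\ne 0$, i.e.\ $(\det\Phi)^n\ne 0$, deserves a proof.

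The paper avoids this obstacle entirely by a different and much sharper specialisation: it sends $X_i$ to the $i$-th standard basis vector for $i<m$ and $X_m$ to $(0,\dots,0,t,\theta^1,\dots,\theta^{2n})$. This kills all odd variables except $\theta^\mu_m$, so that $\xi(\det Q)=t^2+\nu$ with a \emph{single} quadratic Grassmann element $\nu$, and $\xi(\Delta F(\det Q)^k)=t^{2k+1}(1+\nu/t^2)^{k+1/2}$. The coefficient of $t^{2(k-n)+1}$ is then a \emph{single} term $\binom{k+1/2}{n}\nu^n$, manifestly nonzero because each factor $k+\tfrac12-j$ is a half-integer and $\nu^n$ is a nonzero multiple of $\theta^1\cdots\theta^{2n}$. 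The moral: rather than projecting to the full top Grassmann degree (which forces you to control a sum), specialise first so that only $2n$ odd variables survive---then the ``top'' is reached by a unique term and no cancellation can occur.
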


\begin{proof}
Let $W$ be the superspace $\C^{1|2n}$ with basis $t,\theta^1,\dots,\theta^{2n}$, where $t$ is even
and the  $\theta^\mu$ are arbitrary Grassmann variables.
Consider the surjective homomorphism of superalgebras $\xi:\cS\lr S(W)$, where the free generators
$X^i_t$ of $\cS$ are mapped as follows.

With $X_i$ as in \ref{ss:not}, we have
$\xi(X_i) = (0, \dots, 0, \underbrace{1}_i, 0, \dots, 0)$ for $i< m$ and
$\xi(X_m) = (0, \dots, 0, t,  \theta^1 \dots, \theta^{2n})$.

Then
$\xi(Q)=\begin{pmatrix}
1&0&...\\
0&1&0&...\\
&&...&&\\
&&&&&1&0\\
&&&&&0&t^2+\nu
\end{pmatrix},$
where $\nu=\xi(\phi_{m,m})$.

Thus $\xi(\Delta)=t$, and we may extend $\xi$ to $\xi:\cS[\Delta\inv]\lr S(W)[t\inv]$.

Evidently, $\xi(\det Q)= t^2 + \nu$.  Moreover a simple calculation shows that
$$
\xi(\phi_{m,m})=\nu=2\sum_{j=1}^n\theta^{2j-1}\theta^{2j}.
$$

Hence
\be\label{eq:exp}
\begin{aligned}
\xi(\Delta F (\det Q)^k)=&t^{2k+1} \left(1 + \sum_{\ell\ge 1}\frac{1}{\ell!}
\prod_{j=0}^{\ell-1}\left(\frac{1}{2}+k-j\right)\cdot
\left(\frac{\nu}{t^2}\right)^\ell\right).
\end{aligned}
\ee
Now $S(W)[t\inv]\simeq\C[t,t\inv]\ot\Lambda(\theta^1,\dots,\theta^{2n})$, where $\Lambda=\Lambda(\theta^1,\dots,\theta^{2n})$
is the exterior algebra on $\sum_j\C\theta^j$. Thus each of its elements  has a unique expansion $\sum_{i\in\Z}t^i\xi_i$,
with $\xi_i\in\Lambda$.
But from the right hand side of equation \eqref{eq:exp} the coefficient of $t^{2(k-n)+1}$ in the
expansion of $\xi(\Delta F (\det Q)^k)$ is
$$
 \nu^n  \frac{1}{n!}\prod_{j=0}^{n-1}\left(\frac{1}{2}+k-j\right).
$$
Moreover from the explicit formula for $\nu$ given above, $\nu^n\neq 0$.
Hence if $k<n$, $\xi(\Delta F (\det Q)^k)$ is not in $S(W)$.
It follows that $\Delta F (\det Q)^k\not\in\cS$,  proving the first statement.

As for the second, note that since $\zeta=\det Q - \Delta^2$ is nilpotent, there is a positive integer $N$ such that $\zeta^{N+1}=0$.
Then
\[
\Delta F (\det Q)^N=\Delta^{2N+1}+ \sum_{k\ge 1} \frac{1}{k!}\prod_{j=0}^{k-1} \left(\frac{1}{2}+N-j\right)\cdot  \Delta^{2(N-k)+1}{\zeta}^k.
\]
If $k\le N$, then $\Delta^{2(N-k)+1}\zeta^k$
belongs to $\cS$; if $k> N$, we have $\Delta^{2(N-k)+1}\zeta^k=0$.
Hence $\Delta F (\det Q)^N\in\cS$. This proves the existence of the desired $N_0$.
\end{proof}
We shall see  in Section \ref{sect:construct}  that the $N_0$ in Lemma \ref{lem:regular-singular} is equal to $n$.

\begin{definition}
Define the element $\Omega$ by
\begin{eqnarray}\label{def:Omega}
\Omega=\Delta F (\det Q)^n.
\end{eqnarray}
This $\Omega$ will be referred to as a super Pfaffian.
\end{definition}
For the moment, we know only that $\Omega\in\cS[\Delta\inv]$, but
subject to the above assertion about $N_0$, we now have the following result.
\begin{theorem} \label{thm:main1} The super Pfaffian $\Omega$
belongs to $\cS^{\osp(V)}$. It is even, homogeneous of degree
$m(2n+1)$, and has leading term $\pi(\Omega)=\Delta^{2n+1}$.  Furthermore,
$\Omega$ satisfies
$
g(\Omega)=\det(g)\Omega
$
for any $g\in \OSp(V)_0$, hence is a pseudo $\OSp(V)$-invariant.
\end{theorem}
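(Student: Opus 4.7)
The plan is to leverage Lemmas \ref{lem:sqrt-inv} and \ref{lem:regular-singular}, taking as given the forthcoming assertion from Section \ref{sect:construct} that $N_0=n$. That assertion is what promotes $\Omega$ from $\cS[\Delta\inv]$ into $\cS$ and is the substantive content of the theorem; once it is in hand, the remaining four assertions follow from routine manipulation of the factorization $\Omega=\Delta F\cdot(\det Q)^n$ using the two lemmas.

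For $\osp(V)$-invariance, I would observe that $\Delta F$ is annihilated by $\osp(V)$ by Lemma \ref{lem:sqrt-inv}, while $\det Q$ is a polynomial in the $q_{ij}$, which are $\OSp(V)$-invariant, hence $\osp(V)$-invariant, by Remark \ref{rem:elmt-invs}. Since $\osp(V)$ acts by $\Z_2$-graded derivations, the product $\Omega$ is annihilated by $\osp(V)$ in $\cS[\Delta\inv]$, and therefore in $\cS$ once we know $\Omega\in\cS$. The pseudo-invariance under $\OSp(V)_0$ follows in the same way: Lemma \ref{lem:sqrt-inv} gives $g(\Delta F)=\det(g)\,\Delta F$, while $(\det Q)^n$ is $\OSp(V)_0$-invariant, and multiplicativity of the $\OSp(V)_0$-action yields $g(\Omega)=\det(g)\,\Omega$.

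For parity and degree, each $q_{ij}=p_{ij}+\phi_{ij}$ is $\Z_2$-even and $\Z_+$-homogeneous of degree $2$, hence $\det Q$ is even of degree $2m$, and $\zeta=\det Q-\Delta^2$ is even of degree $2m$ as well. Thus $F$, expressed as a power series in $\zeta/\Delta^2$, is even of $\Z_+$-degree $0$, and $\Omega=\Delta F(\det Q)^n$ is even of total degree $m+2mn=m(2n+1)$. For the leading term, I extend $\pi$ as a homomorphism to $\cS[\Delta\inv]$ and use $\pi(\phi_{ij})=0$ to obtain $\pi(Q)=P$, hence $\pi(\det Q)=\det P=\det(X^\tr X)=\Delta^2$. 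Therefore $\pi(\zeta)=0$, $\pi(F)=1$, and $\pi(\Omega)=\Delta\cdot 1\cdot\Delta^{2n}=\Delta^{2n+1}$.

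The main obstacle is the deferred equality $N_0=n$, since Lemma \ref{lem:regular-singular} only supplies the lower bound $N_0\ge n$. Establishing the matching upper bound requires an explicit realization of $\Omega$ as an element of $\cS$ --- a combinatorial or representation-theoretic formula producing a polynomial in the $X^a_t$ whose image in $\cS[\Delta\inv]$ agrees with $\Delta F(\det Q)^n$ --- and this is precisely what will be carried out in Section \ref{sect:construct}.
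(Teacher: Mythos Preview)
Your proposal is correct and follows essentially the same approach as the paper: the paper's proof simply states that all assertions follow directly from the definition of $\Omega$ and Lemma \ref{lem:sqrt-inv}, except the membership $\Omega\in\cS$, which is deferred until after Theorem \ref{thm:equivalence} in Section \ref{sect:construct}. Your write-up spells out the parity, degree, and leading-term computations more explicitly than the paper does, but the logical structure and the deferral of the key point are identical.
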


\begin{proof}
All the statements of the theorem follow directly from the definition of $\Omega$
and Lemma \ref{lem:sqrt-inv}, except the claim that
$\Omega\in\cS$.  This will be shown after  we establish
Theorem \ref{thm:equivalence}.
\end{proof}

\begin{remark}
When $n=0$, we have $V=\C^m$, and $\osp(V)$ and $\OSp(V)$ become
$\fso(m)$ and ${\rm O}(m)$ respectively.  In this case $\Omega$ reduces to $\Delta$.
Thus the super Pfaffian is a generalisation of the determinant.
\end{remark}

\begin{example}
If $m=1$,  we have $q= x^2 + \phi$ with $\phi:=\sum_{\mu, \nu=1}^{2n}\theta^\mu (\kappa^{-1})_{\mu\nu} \theta^\nu$.
The super Pfaffian is given by
\[
\begin{aligned}
\Omega =& x^{2n+1}+ \sum_{\ell=1}^n\frac{1}{\ell!}\prod_{j=0}^{\ell-1}\left(\frac{1}{2}+n-j\right)\cdot
 x^{2(n-\ell)+1} \phi^\ell.
\end{aligned}
\]
Note that this is indeed an element of $\cS$.
\end{example}

\subsection{A second construction of the super Pfaffian}\label{sect:construct}
We review the construction of a pseudo invariant given in \cite{LZ6},
and show that it coincides with $\Omega$ up to a sign.

We write $G=\OSp(V)$, $G_0=\OSp(V)_0$, and
$\fg=\osp(V)$, and denote by
${\rm U}(\fg)$ the universal enveloping superalgebra of $\fg$.
Any unexplained notation appearing in this subsection
can be found in \cite{LZ6,SZ01, SZ05}.

Let $\U(\fg)^0$ be the finite dual Hopf superalgebra of
$\U(\fg)$. Any locally finite $\U(\fg)$-module $M$ may be regarded as a right $\U(\fg)^0$-comodule
with the structure map $\omega: M\longrightarrow M\otimes \U(\fg)^0$,
$m\mapsto \sum_{(m)} m_{(0)}\otimes m_{(1)}$,
defined by
\[
\sum_{(m)} m_{(0)}  \langle m_{(1)}, x\rangle = (-1)^{[m][x]}x m, \quad \forall x\in \U(\fg).
\]
Here $m,m_0\in M$, $m_1\in\U(\fg)^0$, and $\langle\:,\;\rangle$ is the pairing between $\U(\fg)^0$
and $\U(\fg)$.

It was shown in \cite{SZ05} (also see \cite{SZ01}) that there exists a
nontrivial left and right $\fg$-invariant integral
$
\int:  \U(\fg)^0 \longrightarrow \C.
$
Let $\cI_M:=(\id_M\otimes\int)\omega$. Then $\cI_M(M)\subseteq M^\fg$.

Next observe that in the $\gl(V)\times\gl_N$ Howe duality \eqref{eq:Howe} on $\cS(N)$,
the simple highest weight $\gl(V)$-module $V^\lambda$ may be taken to be defined with respect to the
standard Borel subalgebra $\fb$ of $\gl(V)$ spanned by the matrix units $E_{a b}$
($1\le a\le b\le m+2n$)  relative to the basis $B$.
Given $V^\lambda\subset\cS(N)$, let
\[
V^\lambda_0=\{v\in V^\lambda\mid \gl(V)_{+1}(v)=0 \}, \quad
V^\lambda_b=\{v\in V^\lambda\mid \gl(V)_{-1}(v)=0 \};
\]
these are
respectively the top and bottom components of
$V^\lambda$ with respect to the $\Z$-grading of $\U(\gl(V))$ defined by $deg(\gl(V)_{\pm 1})=\pm 1$.
Both subspaces are $\gl(V)_{\bar0}$-modules,
so we may consider their  respective subspaces
 $(V^\lambda_0)^{\osp(V)_{\bar0}}$ and $(V^\lambda_b)^{\osp(V)_{\bar0}}$ of $\osp(V)_{\bar0}$-invariants.

The following result is \cite[Lemma 7.2]{LZ6} stated slightly more generally. Its proof
remains the same. Let $\cI_{\cS(N)}$ be the map $\cI_M$ defined earlier for $M=\cS(N)$.
\begin{lemma}\label{lem:construct}
\begin{enumerate}
\item
Let $V^\lambda\subset\cS(N)$ be a simple $\gl(V)$-submodule which is typical, and assume that
there exists a nonzero $\delta_0$ that belongs to  either $(V^\lambda_0)^{\osp(V)_{\bar0}}$ or
$(V^\lambda_b)^{\osp(V)_{\bar0}}$.   Then $\delta:=\cI_{\cS(N)}(\delta_0)$ is a nonzero element
of $(V^\lambda)^{\osp(V)}\subset\cS(N)^{\osp(V)}$.
\item If the $\osp(V)_{\bar0}$-invariant $\delta_0$
satisfies $g(\delta_0)=\det(g) \delta_0$
for all $g\in \OSp(V)_0$, then $\delta$ is a pseudo $\OSp(V)$-invariant satisfying
$g(\delta)=\det(g) \delta$.
\end{enumerate}
\end{lemma}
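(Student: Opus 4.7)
The plan is to reduce both claims to two key properties of the integral $\int$ on $\U(\fg)^0$: its left and right $\fg$-invariance (the defining property), and its compatibility with the adjoint $\OSp(V)_0$-action coming from the Harish-Chandra pair structure of $\OSp(V)$. This is essentially the strategy of \cite[Lemma 7.2]{LZ6}, now applied in the slightly broader generality of the current statement.

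For part (1), I would first observe that $\cI_{\cS(N)}$ automatically restricts to $V^\lambda$: since $V^\lambda$ is a $\gl(V)$-submodule it is $\fg$-stable, so $\omega(V^\lambda) \subseteq V^\lambda \ot \U(\fg)^0$ and applying $\id \ot \int$ keeps us inside $V^\lambda$. That $\delta \in \cS(N)^{\fg}$ is then the defining property of $\cI_{\cS(N)}$ arising from left-invariance of $\int$. The substantive content of (1) is therefore the nonvanishing $\delta \ne 0$. To establish this, my plan is to exploit the $\Z$-grading $\gl(V) = \gl(V)_{-1}\oplus\gl(V)_0\oplus\gl(V)_{+1}$ introduced just above the lemma, which restricts to a $\Z$-grading $\fg = \fg_{-1}\oplus\fg_{\bar 0}\oplus\fg_{+1}$ of $\osp(V)$ with $\fg_{\pm 1}\subset\fg_{\bar 1}$. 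This yields a PBW factorisation $\U(\fg) \simeq \Lambda(\fg_{-1}) \ot \U(\fg_{\bar 0}) \ot \Lambda(\fg_{+1})$ as a vector space and, dually, a factorisation of $\int$ as a Berezin integral over the odd generators composed with a Haar-type integral on $\OSp(V)_0$. Typicality of $V^\lambda$ implies that $V^\lambda$ is free as a $\Lambda(\fg_{-1})$-module on its bottom component $V^\lambda_b$ (and free as a $\Lambda(\fg_{+1})$-module on its top $V^\lambda_0$). Consequently, if $\delta_0$ lies in the appropriate extremal piece, the expansion of $\omega(\delta_0)$ contains a term of top odd degree whose $V^\lambda$-coefficient is a nonzero scalar multiple of $\delta_0$, and the Berezin part of $\int$ selects exactly this term. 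The $\fg_{\bar 0}$-invariance of $\delta_0$ then ensures that the remaining Haar-type integral outputs a nonzero normalising constant times $\delta_0$, so $\delta \ne 0$.

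For part (2), the argument is short. The coaction $\omega$ is $\OSp(V)_0$-equivariant with respect to the given $\OSp(V)_0$-action on $\cS(N)$ and the adjoint action on $\U(\fg)^0$, and $\int$ is adjoint-invariant (a standard property of the integral from \cite{SZ01, SZ05}). Hence for any $g\in\OSp(V)_0$,
\[
g(\delta) = g(\cI_{\cS(N)}(\delta_0)) = \cI_{\cS(N)}(g(\delta_0)) = \cI_{\cS(N)}(\det(g)\delta_0) = \det(g)\delta.
\]

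The main obstacle will be the nonvanishing assertion in part (1). Typicality is precisely what is needed to rule out atypical cancellations in the Berezin computation, and making this rigorous requires a careful PBW-style expansion of $\omega$ on extremal vectors together with the freeness statement above; the rest of the argument is essentially formal bookkeeping with the integral.
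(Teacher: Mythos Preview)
Your proposal is correct and follows essentially the same route as the paper, which itself defers the proof to \cite[Lemma 7.2]{LZ6}. The discussion immediately after the lemma in the present paper (the formula $\int=\nu(z)\int_0 p$, the PBW choice with odd generators to the left, and the identification of the top-degree piece $z_+\in\wedge^{2mn}\gl(V)_{\bar1}$ giving the leading term) is exactly the machinery you invoke: a Berezin-type extraction of the top odd component combined with the $\fg_{\bar0}$-integral $\int_0$, with typicality guaranteeing the freeness needed for nonvanishing.
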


Now we return to the special case $\cS=\cS(m)$.
Fix some order of the elements $\theta^\mu_j$, and form the product
$\Pi=\prod_{\mu, j} \theta^\mu_j$ according to the chosen order. Then $\Pi$ is uniquely defined up to a sign.
It is in the top degree component of the Grassmann algebra.

\begin{theorem} \label{thm:pfaff} Let $\delta_0=\Delta\Pi$.
 Then $\delta_0$ is a lowest weight vector, which satisfies the properties required by both parts of
Lemma \ref{lem:construct}, and hence $\cI_\cS(\delta_0)$ is
a nonzero pseudo invariant of $\OSp(V)$.
\end{theorem}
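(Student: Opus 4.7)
The plan is to identify the simple $\gl(V)$-submodule of $\cS$ containing $\delta_0$, verify the hypotheses of both parts of Lemma \ref{lem:construct}, and then invoke the lemma.

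First I would determine the $\gl(V)$-module structure around $\delta_0$. A direct computation shows that $\delta_0=\Delta\Pi$ is a $\gl_m$-highest weight vector of weight $\mu=((2n+1)^m)$: for each column $s$ the Euler operator $E_{ss}$ gives eigenvalue $1+2n$, while for $s\ne t$, $E_{st}(\Delta)=0$ by the alternating property of the determinant and $E_{st}(\Pi)=0$ because $\Pi$ already contains every Grassmann variable of the $t$-th column (so any new copy $\theta_s^\mu$ would square to zero). Howe duality \eqref{eq:Howe} thus places $\delta_0$ inside $V^\mu\otimes L_\mu$; the partition $\mu$ lies in the $(m,2n)$-hook and corresponds to the $\gl(V)$-highest weight $\lambda=((2n+1)^m\,|\,0^{2n})$. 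Moreover, the Cartan generators acting on $\delta_0$ give $E_{aa}(\delta_0)=\delta_0$ for $a\le m$ and $E_{aa}(\delta_0)=m\delta_0$ for $a>m$, so $\delta_0$ has $\gl(V)$-weight $(1^m\,|\,m^{2n})=\lambda-2\rho_1$, where $\rho_1=\frac12\sum_{i,j}(\epsilon_i-\delta_j)$; this is precisely the $\gl(V)$-lowest weight of the typical Kac module $V^\lambda$, so $\delta_0$ is a lowest weight vector.

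Next I would place $\delta_0$ in $V^\lambda_b$ (or $V^\lambda_0$, depending on the sign convention chosen for $\gl(V)_{\pm 1}$) by showing that the odd generators $E_{ba}=\sum_t\theta_t^{b-m}\partial_{x_t^a}$, for $b>m\ge a$, annihilate it. Since $\Pi$ has no $x$-variables, $E_{ba}(\Pi)=0$; and $E_{ba}(\Delta)=\sum_t(-1)^{a+t}\theta_t^{b-m}M_{at}$ with $M_{at}$ the $(a,t)$-minor of $X$, so multiplying by $\Pi$ kills every summand via the saturation identity $\theta_t^{b-m}\Pi=0$. The resulting extremal $\gl(V)_{\bar 0}$-component is one-dimensional, with character $\det V_{\bar 0}\otimes(\det V_{\bar 1})^{\otimes m}$; since $\osp(V)_{\bar 0}=\fso(V_{\bar 0})\oplus\fsp(V_{\bar 1})\subset\fsl(V_{\bar 0})\oplus\fsl(V_{\bar 1})$ acts trivially on any determinantal character, $\delta_0$ is $\osp(V)_{\bar 0}$-invariant. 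Pseudo-invariance $g(\delta_0)=\det(g)\delta_0$ under $\OSp(V)_0=\Or(V_{\bar 0})\times\Sp(V_{\bar 1})$ is similarly transparent: the orthogonal factor scales $\Delta$ by $\det(g_1)$ while the symplectic factor has determinant $1$ and so fixes $\Pi$.

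The final ingredient is the typicality of $V^\lambda$. Using the standard Weyl vector $\rho=\rho_0-\rho_1$ for $\gl(m|2n)$ and the metric $(\epsilon_i,\epsilon_j)=\delta_{ij}$, $(\delta_i,\delta_j)=-\delta_{ij}$, a short calculation gives $(\lambda+\rho,\epsilon_i-\delta_j)=m+2n+2-(i+j)$ for each isotropic odd root, which is strictly positive for $1\le i\le m$, $1\le j\le 2n$; hence the Kac--Kazhdan typicality condition holds. With the hypotheses of both parts of Lemma \ref{lem:construct} verified, its two conclusions give that $\cI_\cS(\delta_0)$ is a nonzero $\osp(V)$-invariant on which $\OSp(V)_0$ acts by $\det$, completing the proof. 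The main subtlety is the typicality check; the rest of the argument reduces to the clean identity $\theta_t^{b-m}\Pi=0$, which encodes the fact that $\Pi$ is the volume element of the odd subalgebra.
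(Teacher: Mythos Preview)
Your argument is correct. The paper actually states Theorem \ref{thm:pfaff} without a proof: there is no proof environment following it, and the subsequent paragraphs pass directly to an explicit description of the integral $\int$ and the computation of the leading term $\pi(\cI_\cS(\delta_0))=\Delta^{2n+1}$, rather than verifying the hypotheses of Lemma \ref{lem:construct}. So your write-up supplies exactly the details the paper leaves implicit.

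All of your steps check out. The identification of $\delta_0$ as a $\gl_N$-highest weight vector of weight $((2n+1)^m)$ (indeed annihilated by all $E_{st}$, $s\neq t$, since $L_{((2n+1)^m)}$ is one-dimensional) correctly places it, via \eqref{eq:Howe}, in the simple $\gl(V)$-module $V^\lambda$ with $\lambda=((2n+1)^m\,|\,0^{2n})$. The weight computation $(1^m\,|\,m^{2n})=\lambda-2\rho_1$ is right, and since the top $\gl(V)_{\bar 0}$-component $L_0(\lambda)$ is one-dimensional here, this weight space in $V^\lambda$ is one-dimensional as well, so your conclusion that $\delta_0$ is a lowest weight vector is justified. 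The annihilation by $\gl(V)_{-1}$ via $\theta_t^{b-m}\Pi=0$ is clean, and the $\osp(V)_{\bar 0}$-invariance and $\OSp(V)_0$-pseudo-invariance follow as you say from the determinantal nature of $\Delta$ and $\Pi$. Finally, your typicality computation $(\lambda+\rho,\epsilon_i-\delta_j)=m+2n+2-(i+j)\ge 2$ is correct.

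One small clarification worth making explicit: when you conclude ``$\delta_0$ is a lowest weight vector'' in the first paragraph, the weight identification alone does not yet show this; it is only after you verify $\gl(V)_{-1}(\delta_0)=0$ in the second paragraph, together with the $\fsl(V_{\bar 0})\oplus\fsl(V_{\bar 1})$-invariance of the determinantal character, that all $\gl(V)$-lowering operators are seen to annihilate $\delta_0$. The logic is fine, just slightly out of order as written.
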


The integral on $\U(\fg)^0$ can be described more explicitly \cite{SZ01, SZ05}.
Let $p: \U(\fg)^0\longrightarrow \U(\fg_{\bar0})^0$ be the restriction map induced by the natural
Hopf superalgebra embedding $\U(\fg_{\bar0})\longrightarrow\U(\fg)$.  There exists a unique (left and right)
$\fg_{\bar0}$-invariant integral $\int_0: \U(\fg_{\bar0})^0\longrightarrow \C$ such that $\int_0 1=1$,
thus we have the composition map $\int_0 p: \U(\fg)^0\longrightarrow\C$,
which can be regarded as an element of $(\U(\fg)^0)^*$.
Consider $Z=\U(\fg)/\U(\fg)\fg_{\bar0}$ as a $\fg$-module, and let $Z^\fg$ be its invariant submodule.
It was shown in \cite{SZ05} (and in \cite{SZ01} for various cases)
that $\dim Z^\fg = 1$.  Fix a generator $z+\U(\fg)\fg_{\bar0}$ of $Z^\fg$.  Let
$\nu: \U(\fg)\longrightarrow(\U(\fg)^0)^*$ be the superalgebra embedding
defined for any $x\in\U(\fg)$ by $\nu(x)(a)= (-1)^{[a][x]}a(x)$ for all $a\in\U(\fg)^0$. Then
$
\int:= \nu(z)\int_0 p,
$
which is independent of the representative $z$ chosen for
$z+\U(\fg)\fg_{\bar0}$. This leads to the following formula (see \cite[Definition 7.3]{LZ5})
\begin{eqnarray}
\cI_\cS(\delta_0)=z(\Delta \Pi).
\end{eqnarray}

Let us take a PBW basis for $\U(\fg)$ such that generators of $\fg_{\bar0}$ appear to the right
of those of $\fg_{\bar1}$.
Then the representative $z$ of $z+\U(\fg)\fg_{\bar0}$ can be expressed as a sum of
basis elements involving only elements of $\fg_{\bar1}$.
We regard $z$ as in $\U(\gl(V))$ via the embedding $\fg\subset\gl(V)$,
and decompose it into a sum of elements which are weight vectors with respect to the adjoint action
of $\gl(V)_{\bar0}\cong \gl_m(\C)\oplus\gl_{2n}(\C)$. If we denote by $z_+$ the element with the highest weight
in the decomposition of $z$ as a sum of weight vectors,
we have $z_+\in\wedge^{2mn} \gl(V)_{\bar1}$,
where $\gl(V)_{\bar 1}$ is the span of $E_{i, m+\nu}$ with $1\le i\le m$ and $1\le \nu\le 2n$.
Then $z_+$ is equal to the product of all $E_{i, m+\nu}$
in any given order, up to a nonzero scalar multiple $c$. We shall choose $z$ so as
to make $c=\pm 1$.  Note that $g z_+= \det(g)^{2n} z_+$ for all $g\in\GL((V)_{\bar0})\times\GL((V)_{\bar1})$.
The leading term $\pi(\cI_\cS(\delta_0))$ of $\cI_\cS(\delta_0)$ is $z_+(\delta_0)$, which is a highest weight vector of the
simple $\gl(V)$-submodule of $\cS$ containing $\delta_0$. Up to a sign,  it is given by
\begin{eqnarray}\label{rm:leading-term}
\pi(\cI_\cS(\delta_0))=\Delta^{1+2n}.
\end{eqnarray}
Note that $\cI_\cS(\delta_0)$ is even and homogeneous of degree $m(2n+1)$.

\begin{example}\label{eg:osp2-2}  We consider $\cI_\cS(\delta_0)$ for the Lie superalgebra $\osp_{2|2}$.
Now $\cS=\C[x^i_{t}, \theta^\mu_t\mid i, \mu, t=1, 2]$, where $x^i_t$ are ordinary variables
and $\theta^\mu_t$ are Grassmann variables. Then $\osp_{2|2}$ is realised by the following differential operators on $\cS$:
\[
\begin{aligned}
&J^{1 2} =\sum_{t=1}^2\left(x^1_t\frac{\partial}{\partial x^2_t}-x^2_t\frac{\partial}{\partial x^1_t}\right), \\
&J^{i 3}=\sum_{t=1}^2\left(x^i_t\frac{\partial}{\partial \theta^2_t} - \theta^1_t \frac{\partial}{\partial x^i_t}\right), \quad
J^{i 4}=\sum_{t=1}^2\left(-x^i_t\frac{\partial}{\partial \theta^1_t} - \theta^2_t \frac{\partial}{\partial x^i_t}\right), \quad i=1, 2,\\
&J^{3 3}=\sum_{t=1}^2\theta^1_t\frac{\partial}{\partial \theta^2_t}, \quad
J^{4 4}=-\sum_{t=1}^2\theta^2_t\frac{\partial}{\partial \theta^1_t}, \quad
J^{3 4}=\sum_{t=1}^2\left(-\theta^1_t\frac{\partial}{\partial \theta^1_t} + \theta^2_t\frac{\partial}{\partial \theta^2_t}\right).
\end{aligned}
\]

It is much more convenient to work with the variables $z_t=x^1_t+\sqrt{-1}x^2_t$ and  $\bar{z}_t=x^1_t-\sqrt{-1}x^2_t$.
We have $J^{1 2} =\sqrt{-1}\sum_{t=1}^2\left(z_t\frac{\partial}{\partial z_t}-{\bar z}_t\frac{\partial}{\partial {\bar z}_t}\right)$.
Let $J^\alpha=J^{1 \alpha}+\sqrt{-1} J^{2 \alpha}$ and ${\bar J}^\alpha=J^{1 \alpha}-\sqrt{-1} J^{2 \alpha}$, then
\[
\begin{aligned}
&J^3=\sum_{t=1}^2\left(z_t\frac{\partial}{\partial \theta^2_t} - 2\theta^1_t \frac{\partial}{\partial {\bar z}_t}\right), \quad
J^4=\sum_{t=1}^2\left(-z_t\frac{\partial}{\partial \theta^1_t} - 2\theta^2_t \frac{\partial}{\partial {\bar z}_t}\right),\\
&{\bar J}^3=\sum_{t=1}^2\left({\bar z}_t\frac{\partial}{\partial \theta^2_t} - 2\theta^1_t \frac{\partial}{\partial  z_t}\right), \quad
{\bar J}^4=\sum_{t=1}^2\left(-{\bar z}_t\frac{\partial}{\partial \theta^1_t} - 2\theta^2_t \frac{\partial}{\partial z_t}\right).
\end{aligned}
\]
In view of \cite[Theorem 4]{SZ01}, we let
$\gamma= J^3 J^4 {\bar J}^4 {\bar J}^3$,  then
$\cI_\cS(\delta_0)=-\gamma(\delta_0)$ with
$\delta_0=(z_1{\bar z}_2 - {\bar z}_1 z_2)\theta^1_1\theta^2_1\theta^1_2\theta^2_2$.

We now work out the explicit expression of $\cI_\cS(\delta_0)$ as an element of $\cS$.
Let $\Delta=z_1{\bar z}_2 - {\bar z}_1 z_2$,
$\Pi_i=\theta^i_1\theta^i_2$. Then $\delta_0=-\Delta\Pi_1\Pi_2$.  We have
\[
\begin{aligned}
&{\bar J}^3(\delta_0)= \sum_i {\bar z}_i\frac{\partial}{\partial \theta^2_i}\delta_0, \quad
{\bar J}^4{\bar J}^3(\delta_0)  =- \sum_{i, j} {\bar z}_j {\bar z}_i \frac{\partial}{\partial \theta^1_j}
								\frac{\partial}{\partial \theta^2_i}\delta_0,\\
&J^4{\bar J}^4{\bar J}^3(\delta_0)= \Delta^2 \sum_j {\bar z}_j \frac{\partial}{\partial \theta^2_j}\Pi_2
								- 8\sum_j {\bar z}_j \frac{\partial}{\partial \theta^1_j}\delta_0,
\end{aligned}
\]
and $-\cI_\cS(\delta_0)=J^3 J^4{\bar J}^4{\bar J}^3(\delta_0)=\Omega_0+\Omega'_2+\Omega''_2+\Omega_4$, with
\[
\begin{aligned}
&\Omega_0=\Delta^2 \sum_{i, j}  z_i  {\bar z}_j\frac{\partial}{\partial \theta^2_i}  \frac{\partial}{\partial \theta^2_j}\Pi_2,
&\quad  \Omega_4=16 \sum_{i, j} \theta^1_i  \frac{\partial}{\partial{\bar z}_i} \left({\bar z}_j\frac{\partial}{\partial \theta^1_j}\delta_0\right), \\
&\Omega'_2= - 2 \sum_{i, j} \theta^1_i  \frac{\partial}{\partial{\bar z}_i} \left(\Delta^2  {\bar z}_j \frac{\partial}{\partial \theta^2_j}\Pi_2 \right), &\quad
 \Omega''_2=-8\sum_{i, j} z_i  \frac{\partial}{\partial \theta^2_i}\left({\bar z_j}  \frac{\partial}{\partial \theta^1_j}\delta_0\right).
\end{aligned}
\]
We have
\begin{eqnarray}\label{eq:omega04}
\Omega_0=- \Delta^3, \quad \Omega_4=48\delta_0.
\end{eqnarray}
It is a matter of putting up with the pain of doing the tedious and lengthy computations to show that
\[
\begin{aligned}
\Omega'_2= &-2\Delta^2(\theta^1_1\theta^2_2-\theta^1_2\theta^2_1)
-4\Delta (z_1{\bar z}_1\theta^1_2\theta^2_2+z_2{\bar z}_2\theta^1_1\theta^2_1)\\
			&+4\Delta (z_1{\bar z}_2\theta^1_2\theta^2_1-z_2{\bar z}_1\theta^1_1\theta^2_2),\\
\Omega''_2= &-8\Delta (z_1{\bar z}_1\theta^1_2\theta^2_2+z_2{\bar z}_2\theta^1_1\theta^2_1)
			+ 8\Delta (z_1{\bar z}_2\theta^1_1\theta^2_2+z_2{\bar z}_1\theta^1_2\theta^2_1) .
\end{aligned}
\]
Let $\Omega_2:=\Omega'_2+\Omega''_2$. Then
\begin{eqnarray}\label{eq:omega2}
\begin{aligned}
\Omega_2=&-12\Delta (z_1{\bar z}_1\theta^1_2\theta^2_2+z_2{\bar z}_2\theta^1_1\theta^2_1)
			+6 \Delta (z_1{\bar z}_2  +z_2{\bar z}_1) (\theta^1_1\theta^2_2+\theta^1_2\theta^2_1).
\end{aligned}
\end{eqnarray}
Combining \eqref{eq:omega04} and \eqref{eq:omega2}, we obtain
\begin{eqnarray}\label{eq:omega}
\begin{aligned}
\cI_\cS(\delta_0)=&\Delta^3 +12\Delta (z_1{\bar z}_1\theta^1_2\theta^2_2+z_2{\bar z}_2\theta^1_1\theta^2_1) \\
			&-6 \Delta (z_1{\bar z}_2  +z_2{\bar z}_1) (\theta^1_1\theta^2_2+\theta^1_2\theta^2_1)  - 48\delta_0.
\end{aligned}
\end{eqnarray}
Note that all the four terms on the right hand side  are invariant with respect to the even subalgebra $\fg_{\bar0}=\mathfrak{so}_2\oplus\mathfrak{sp}_2$. In fact,
$z_1{\bar z}_1$, $\theta^1_1\theta^2_1$, $z_2{\bar z}_2$, $\theta^1_2\theta^2_2$, $z_1{\bar z}_2  +z_2{\bar z}_1$
and $\theta^1_1\theta^2_2+\theta^1_2\theta^2_1$ are all $\fg_{\bar0}$-invariant.
\end{example}

%
%
The following result relates Theorem \ref{thm:pfaff} to the super Pfaffian $\Omega$.
\begin{theorem}\label{thm:equivalence}
 The element $\cI_\cS(\delta_0)\in \cS$ defined in Theorem \ref{thm:pfaff}
coincides with the super Pfaffian $\Omega$ up to a sign.
\end{theorem}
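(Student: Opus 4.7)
The plan is to show that $\Omega$ and $\cI_\cS(\delta_0)$ have the same square in $\cS[\Delta\inv]$, namely $(\det Q)^{2n+1}$, and then to promote this equality of squares to equality up to sign by a nilpotency argument in the Grassmann direction. As a bonus, once $\cI_\cS(\delta_0)=\pm\Omega$ is established inside $\cS[\Delta\inv]$, the claim $\Omega\in\cS$ (which is needed to complete Theorem~\ref{thm:main1}) follows immediately, since $\cI_\cS(\delta_0)$ visibly lies in $\cS$.

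That $\Omega^2=(\det Q)^{2n+1}$ is immediate from \eqref{def:Omega}: one has $\Omega^2=\Delta^2 F^2(\det Q)^{2n}$ and $F^2=\det Q/\Delta^2$ by construction of $F$. For the harder equality $\cI_\cS(\delta_0)^2=(\det Q)^{2n+1}$, I would first note that $\cI_\cS(\delta_0)^2\in\cS^{\OSp(V)}$, because Theorem~\ref{thm:pfaff} exhibits $\cI_\cS(\delta_0)$ as a pseudo-invariant and $\det(g)^2=1$ on $\OSp(V)_0$. Next, $\delta_0=\Delta\Pi$ is a $\gl_m$-highest weight vector of weight $(2n+1)\omega_m$, and since $\gl_m$ and $\osp(V)$ act on $\cS$ by commuting differential operators (see \eqref{eq:diff-op}--\eqref{eq:diff-gl}), the element $\cI_\cS(\delta_0)=z(\delta_0)$ remains a $\gl_m$-highest weight vector of the same weight. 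Its square is therefore a $\gl_m$-highest weight vector of weight $2(2n+1)\omega_m$ inside $\cS^{\OSp(V)}$. Applying Theorem~\ref{lem:hwv-even} with $N=m$ (so that the condition $\lambda_{m+1}\le 2n$ is vacuous), this highest weight space is one-dimensional, spanned by $D_\lambda=D(m)^{2n+1}=(\det Q)^{2n+1}$. Comparing leading terms under $\pi$ gives $\pi(\cI_\cS(\delta_0)^2)=\Delta^{4n+2}=\pi((\det Q)^{2n+1})$, which fixes the scalar and yields the desired equality.

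With $\cI_\cS(\delta_0)^2=\Omega^2$ established in $\cS[\Delta\inv]$, I would replace $\cI_\cS(\delta_0)$ by $-\cI_\cS(\delta_0)$ if necessary so that $\pi(\cI_\cS(\delta_0))=\Delta^{2n+1}=\pi(\Omega)$. Since $\pi(\Omega)=\Delta^{2n+1}$ is invertible, $\Omega$ itself is a unit in $\cS[\Delta\inv]$, and I may form $u=\cI_\cS(\delta_0)\Omega\inv-1$. The relation $(1+u)^2=1$ gives $u(u+2)=0$, while $\pi(u)=0$ places $u$ in the ideal generated by the $2mn$ odd variables $\theta^\mu_t$. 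That ideal is nilpotent, so $u+2$ is a unit, forcing $u=0$ and hence $\cI_\cS(\delta_0)=\Omega$.

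The principal obstacle is the identification $\cI_\cS(\delta_0)^2=(\det Q)^{2n+1}$: it rests both on the commutativity of the $\gl_m$- and $\osp(V)$-actions (so that $\gl_m$-weights and the highest-weight property transfer from $\delta_0$ to $\cI_\cS(\delta_0)$) and, crucially, on the multiplicity-freeness and highest-weight data of Theorem~\ref{lem:hwv-even}. Once this is in hand, the passage from equal squares to equal elements is purely formal, being the elementary observation that a square root of $1$ which equals $1$ modulo a nilpotent ideal must itself equal $1$.
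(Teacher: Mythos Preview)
Your proof is correct and follows the same overall arc as the paper's: first show that both $\Omega$ and $\cI_\cS(\delta_0)$ square to $(\det Q)^{2n+1}$, then deduce equality up to sign in $\cS[\Delta^{-1}]$. The difference lies in how the identity $\cI_\cS(\delta_0)^2=(\det Q)^{2n+1}$ is established. The paper argues directly from the FFT that $\cS^{\OSp(V)}=\C[q_{ij}\mid i\le j]$ is a polynomial ring and that the leading-term map $\iota=\pi|_{\cS^{\OSp(V)}}$ is an isomorphism onto $\C[p_{ij}\mid i\le j]$; the equality then follows by applying $\iota^{-1}$ to $\pi(\cI_\cS(\delta_0)^2)=(\det P)^{2n+1}$. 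You instead invoke the $\gl_m$-module structure and the multiplicity-freeness of Theorem~\ref{lem:hwv-even} to pin down the one-dimensional highest weight space. Both are valid; the paper's route is slightly more elementary (it avoids appealing to the full decomposition of Theorem~\ref{lem:hwv-even}, whose proof itself uses the FFT), while yours is more module-theoretic. Your treatment of the square-root step is in fact more careful than the paper's: the paper asserts without comment that $x^2=1$ in $\cS[\Delta^{-1}]$ forces $x=\pm1$, which is not immediate since this ring has zero-divisors, whereas your nilpotency argument via $u(u+2)=0$ with $u\in\ker\pi$ makes this rigorous.
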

\begin{proof}
By the first fundamental theorem of invariant theory for the orthosymplectic supergroup $G$ proved in \cite{LZ6},
the subalgebra $\cS^G$ of $G$-invariants is generated by the elements $q_{i j}$ $(i\le j)$.
Let $\langle p_{i j} \mid i\le j\rangle$ be the subalgebra of
$\cS$ generated by $p_{i j}$ with $i\le j$.  Note that
this in fact is the polynomial algebra $\C[p_{i j}\mid i\le j]$, as can be seen in many ways , e.g.,
by using the second fundamental theorem of invariant theory for ${\rm O}(V_{\bar0})$. We have
$\pi(\cS^G)= \C[p_{i j}\mid i\le j]$.
This implies that
$\cS^G$ is the polynomial algebra $\C[q_{i j} \mid i\le j]$,
and we have the algebra isomorphism
\[
\iota=\pi|_{\cS^G}: \cS^G\longrightarrow \C[p_{i j}\mid i\le j], \quad q_{i j} \mapsto p_{i j}.
\]

Consider $\widetilde{\Omega}=\cI_\cS(\delta_0)$ given in Theorem \ref{thm:pfaff}. We have
$\widetilde{\Omega}^2\in\C[q_{i j} \mid i\le j]$, and by \eqref{rm:leading-term},
$
\iota(\widetilde{\Omega}^2)=(\Delta^2)^{1+2n}= (\det P)^{1+2n}.
$
Thus we conclude that $\widetilde{\Omega}^2=(\det Q)^{1+2n}=\Omega^2$.  Therefore,
$\left(\frac{\widetilde{\Omega}}{\Omega}\right)^2=1$ in $\cS[\Delta^{-1}]$, and this implies
that $\frac{\widetilde{\Omega}}{\Omega}$ is either $1$ or $-1$. Hence
$\widetilde{\Omega}$ is equal to $\Omega=\Delta F (\det Q)^n$  up to sign, completing the proof.
\end{proof}

\begin{proof}[Proof of Theorem \ref{thm:main1}]
Since  $\cI_\cS(\delta_0)$ belongs to $\cS$ by construction, and coincides with $\Omega$ up to a sign
by the above theorem, we immediately arrive at Theorem \ref{thm:main1}.
\end{proof}

The following theorem will be generalised to $\cS(N)$ for arbitrary $N$ in Theorem \ref{thm:main2}.
Its proof is much easier than that of the general case.
\begin{theorem} \label{thm:critical} Let  $\cS=\cS(m)=S(V\ot\C^m)$.
The  subalgebra $\cS^{\osp(V)}$ of $\osp(V)$-invariants in $\cS$ is generated by
$\Omega$ and the elementary invariants
$q_{i j}$ $(i, j=1, 2, \dots m)$.
\end{theorem}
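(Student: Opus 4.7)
The plan is to reduce Theorem~\ref{thm:critical} to the equality $\cS^{\OSp(V),\det}=\Omega\cdot\cS^{\OSp(V)}$. Granted this, and using $\cS^{\OSp(V)}=\C[q_{ij}]$ from the FFT for $\OSp(V)$ (Remark~\ref{rem:elmt-invs}) together with the decomposition $\cS^{\osp(V)}=\cS^{\OSp(V)}\oplus\cS^{\OSp(V),\det}$, one concludes at once that $\cS^{\osp(V)}$ is generated as a superalgebra by $\Omega$ and the $q_{ij}$.

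The inclusion $\Omega\cdot\cS^{\OSp(V)}\subseteq\cS^{\OSp(V),\det}$ is immediate from Theorem~\ref{thm:main1}, since the product of an $\OSp(V)$-invariant with the pseudo-invariant $\Omega$ is again pseudo-invariant. For the reverse inclusion I would take $P\in\cS^{\OSp(V),\det}$; then $\Omega P$, being the product of two pseudo-invariants, lies in $\cS^{\OSp(V)}=\C[q_{ij}]$. My aim is to show that $\Omega^2=(\det Q)^{2n+1}$ divides $\Omega P$ in the polynomial ring $\C[q_{ij}]$. Once this is established, writing $\Omega P=\Omega^2 G$ with $G\in\C[q_{ij}]$ and cancelling $\Omega$, which is not a zero divisor in $\cS$ because its leading term $\Delta^{2n+1}$ is a nonzero polynomial, yields $P=\Omega G$ with $G\in\cS^{\OSp(V)}$, as required.

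To verify the divisibility $\Omega^2\mid\Omega P$ in $\C[q_{ij}]$, I propose to transport it along the ring isomorphism $\iota=\pi|_{\cS^{\OSp(V)}}\colon\C[q_{ij}]\xrightarrow{\sim}\C[p_{ij}]$, $q_{ij}\mapsto p_{ij}$, established in the proof of Theorem~\ref{thm:equivalence}. Under $\iota$ the condition becomes $\Delta^{2(2n+1)}\mid\Delta^{2n+1}\pi(P)$ in $\C[p_{ij}]$, or equivalently $\Delta^{2n+1}\mid\pi(P)$ with quotient in $\C[p_{ij}]$. Classical Weyl invariant theory applied to the $O(m)=\OSp(V_{\bar0})$-pseudo-invariant $\pi(P)\in\C[x_j^i]$ gives $\pi(P)=\Delta\cdot h$ with $h\in\C[p_{ij}]$, so the remaining content of the divisibility is $(\det P)^n\mid h$ in $\C[p_{ij}]$.

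This last divisibility is the main obstacle; it encodes the stronger requirement that $P$ is invariant under all of $\osp(V)$ and not only pseudo-invariant under $\OSp(V)_0$. To handle it I would decompose $P=\sum_k P_k$ by degree in the Grassmann generators and exploit that the odd elements $J_{ab}$ of $\osp(V)$ shift Grassmann degree by $\pm1$: the equations $J_{ab}(P)=0$ recursively link consecutive $P_k$, and combined with the termination of the Grassmann algebra at degree $2mn$ and the explicit expression $\Omega=\Delta^{2n+1}F^{2n+1}$ of the super Pfaffian in $\cS[\Delta^{-1}]$ (where $F$ is the unit from~\eqref{eq:F}), this should force $\Delta^{2n+1}\mid\pi(P)$. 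An alternative route is to match $\gl_m$-highest weight vectors directly: by Theorem~\ref{lem:hwv-even}, the elements $\Omega\cdot D_\lambda$ for even $\lambda$ are highest weight vectors in $\cS^{\OSp(V),\det}$ of weight $\lambda+(2n+1)(1,\ldots,1)$, and Howe duality combined with classical $O(m)$-theory applied to leading terms shows that these exhaust the highest weight vectors of $\cS^{\OSp(V),\det}$.
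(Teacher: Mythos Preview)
Your reduction is sound and in fact mirrors the paper's strategy: both arguments reduce to showing that every $\gl_m$-highest weight vector $A\in\cS^{\OSp(V),\det}$ lies in $\Omega\cdot\cS^{\OSp(V)}$, and both multiply by $\Omega$ and compare leading terms via the isomorphism $\iota=\pi|_{\cS^{\OSp(V)}}$. Your observation that $\pi(A)=\Delta\cdot h$ with $h\in\C[p_{ij}]$ is exactly the paper's identification $\pi(A)=\Delta\,\pi(D_\lambda)$, which (after dividing $\Omega A=D_{\lambda+(2n+1)\omega_m}$ by $\Omega$) yields $A=\Delta F\,D_\lambda$ in $\cS[\Delta^{-1}]$.

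The genuine gap is the step you yourself flag as the ``main obstacle'': showing that $(\det P)^n\mid h$, equivalently that $\lambda_m\ge 2n$. Neither of your two suggested routes closes it. Your second route invokes Howe duality and classical $O(m)$-theory on leading terms, but that only pins down the weight of $A$ as $\lambda+\omega_m$ with $\lambda$ even; nothing in the classical $O(m)$ picture forces $\lambda_m\ge 2n$, since that constraint is invisible at the level of $\pi(A)$. Your first route (Grassmann-degree recursion via odd $J_{ab}$) is too schematic to constitute an argument. The paper's device here is the specialisation homomorphism $\xi$ from the proof of Lemma~\ref{lem:regular-singular}: setting $p_{ij}=\delta_{ij}$ for $i,j<m$ and $p_{km}=t^2\delta_{km}$, one computes $\xi(A)=\xi(\Delta F\,D_\lambda)$ explicitly as a Laurent series in $t$ and reads off that regularity at $t=0$ forces $\ell_m\ge n$. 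Once that is in hand, $A=\Omega\,D_{\lambda-2n\omega_m}$ and the proof concludes. So your outline is correct up to this point, and what is missing is precisely this specialisation argument (or an equivalent replacement).
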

\begin{proof}
Consider a highest weight vector $A\in \cS^{\OSp(V), \det}$. Then by the invariant theory of
$SO(V_{\bar0})$, there exists $D_\lambda$ such that $\pi(A)=\Delta\pi(D_\lambda)$. Thus
\[
\pi(\Omega A)= \Delta^2 \pi(D_{\lambda+2n \omega_m})=\pi(D_{\lambda+(2n+1) \omega_m}),
\]
and hence $\Omega A = D_{\lambda+(2n+1) \omega_m}$. This leads to
$
A= \Delta F D_\lambda.
$
Now apply the arguments in the proof of part (2) of Lemma \ref{lem:regular-singular} to $A$.
Setting $p_{i j}=\delta_{i j}$ for $i, j<m$,  and $p_{k m}=p_{m k} =t^2\delta_{k m}$ for all $k$,
we easily see that $A$ is non-singular as $t\to 0$ only if  $D_\lambda$ satisfies
$\ell_m\ge n$. In this case, $A=\Omega D_\mu$ with $\mu_i=\lambda_i-2n$.
Since $\Omega$ is an $\gl(V_{\bar0})$-invariant, every vector in the $\gl(V_{\bar0})$-submodule generated
by $A=\Omega D_\mu$ is of the form $\Omega f$ for some $f$ in the
submodule generated by $D_\mu$.   This completes the proof.
\end{proof}

It follows that $\cS^{\osp(V)}=\cS^{\OSp(V)}\oplus \Omega\cS^{\OSp(V)}$ as an $\cS^{\OSp(V)}$-module.

\section{Pseudo invariants of the orthosymplectic supergroup}

The following result is an easy consequence of the invariant theory of
the orthogonal and symplectic Lie algebras \cite{FH}.
\begin{lemma}\label{lem:small-k}
If $k<m$, then $\cS(k)^{\osp(V)}=\cS(k)^{\OSp(V)}$.
\end{lemma}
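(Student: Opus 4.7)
The plan is to reduce the superalgebraic statement to the classical invariant theory of the orthogonal group. Given the decomposition
$\cS(k)^{\osp(V)} = \cS(k)^{\OSp(V)} \oplus \cS(k)^{\OSp(V), \det}$
already in hand, it suffices to prove that the pseudo invariant summand $\cS(k)^{\OSp(V), \det}$ vanishes whenever $k<m$.

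First I would observe that any element of $\cS(k)^{\OSp(V), \det}$ is in particular a $\det$-pseudo invariant for the whole connected subgroup $\OSp(V)_0 = \Or(V_{\bar 0})\times\Sp(V_{\bar 1})$, so the problem reduces to showing
$$
\cS(k)^{\OSp(V)_0,\det} = 0 \quad\text{for } k<m.
$$

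Next I would exploit the tensor factorisation
$$
\cS(k) = S((V_{\bar 0})^{\oplus k})\otimes \wedge((V_{\bar 1})^{\oplus k}),
$$
noting that $\Or(V_{\bar 0})$ acts trivially on the second factor, $\Sp(V_{\bar 1})$ acts trivially on the first, and the character $\det: \OSp(V)_0 \to \{\pm 1\}$ factors through the first component. An isotypic decomposition under the direct product $\Or(V_{\bar 0})\times\Sp(V_{\bar 1})$ then yields
$$
\cS(k)^{\OSp(V)_0,\det} \;=\; S((V_{\bar 0})^{\oplus k})^{\Or(m),\det}\otimes \wedge((V_{\bar 1})^{\oplus k})^{\Sp(2n)},
$$
so the problem boils down to showing that the classical space $S((\C^m)^{\oplus k})^{\Or(m),\det}$ vanishes when $k<m$.

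This last reduction is a standard fact from classical invariant theory: the $\SO(m)$-pseudo invariants on $(\C^m)^{\oplus k}$ are generated, over the $\Or(m)$-invariant subring, by determinants of $m\times m$ submatrices of the coordinate matrix, none of which can be formed when $k<m$ (cf. \cite[Appendix F]{FH}). The only step requiring any care is verifying that the tensor factorisation above genuinely respects both the $\OSp(V)_0$-action and the character $\det$; this is routine, and I do not expect any serious obstacle to the argument.
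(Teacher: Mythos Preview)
Your proposal is correct and follows essentially the same route as the paper. Both arguments reduce to the classical fact (from \cite[Appendix F]{FH}) that for $k<m$ the space $S((\C^m)^{\oplus k})$ contains no ${\rm O}(m)$-pseudo-invariants; the paper phrases this as ``the $\osp(V)_{\bar0}$-invariants are generated by the $p_{ij}$ and $\phi_{ij}$'', while you phrase it via the tensor factorisation and the vanishing of $S((V_{\bar0})^{\oplus k})^{{\rm O}(m),\det}$, but the content is the same.
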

\begin{proof}
By the invariant theory of $\osp(V)_{\bar0}={\mathfrak{so}}(V_{\bar0})\times {\mathfrak{sp}}(V_{\bar1})$ \cite{FH},
the subspace  of $\osp(V)_{\bar0}$-invariants in $\cS(k)$ for $k<m$ is
generated by the elements $p_{ij}$ and $\phi_{ij}$.
It follows that there exist no pseudo $\OSp(V)$-invariants in this case,
and the lemma is proved.
\end{proof}

\subsection{Decomposition of the space of pseudo invariants}
Now we fix $N\ge m$. Then the super Pfaffian $\Omega=\Delta F D(m)^n$ belongs to $\cS(N)$,
and is a highest weight vector of $\gl_N$ with weight $(2n+1)\omega_m$. Let
$\Gamma(N)$ be the simple $\gl_N$-submodule of $\cS(N)$ generated by $\Omega$;
this is a direct summand in $\cS(N)$ and is of  dimension
$ \prod_{j=m+1}^N \prod_{i=1}^m\frac{2n+1+j-i}{j-i}$.
\begin{definition}\label{def:Pfaffian-space}
The module $\Gamma(N)$ will be called the {\it space of super Pfaffians}, and
each weight vector of  $\Gamma(N)$ will be referred to as a super Pfaffian.
\end{definition}
It is an immediate consequence of Theorem \ref{thm:main1} that $\Gamma(N)$ consists of
pseudo invariants of the orthosymplectic supergroup, that is, it is a subspace of  $\cS(N)^{\OSp(V), \det}$.
Hence from the Howe decomposition \eqref{eq:Howe}, we have
\[
\cS(N)^{\OSp(V), \det}=\oplus_\lambda  (V^\lambda)^{\OSp(V), \det}\otimes L_\lambda.
\]
\begin{theorem}\label{thm:pseudo-inv}
For any $N\ge m$,  the   subspace of pseudo $\OSp(V)$-invariants in $\cS(N)$
has the following multiplicity free decomposition as a $\gl_N$-module:
\begin{eqnarray*}
\cS(N)^{\OSp(V), \det}=\bigoplus_\lambda L_\lambda,
\end{eqnarray*}
where the sum is over all weights $\lambda=\omega_m+\sum_{i=1}^N 2\ell_i \omega_i$
with $\ell_i\in\Z_+$ such that $\lambda_{m+1}\le 2n$ and $\lambda_m\ge 2n+1$.
\end{theorem}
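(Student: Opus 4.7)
The plan is to determine the $\gl_N$-module structure of $\cS(N)^{\OSp(V),\det}$ by transferring the problem to $\cS(N)^{\OSp(V)}$ via multiplication by the super Pfaffian $\Omega$, and appealing to Theorem \ref{lem:hwv-even}.

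Consider the map $m_\Omega:\cS(N)^{\OSp(V),\det}\to\cS(N)^{\OSp(V)}$ given by $f\mapsto\Omega f$. It is well-defined (the product of two pseudo invariants is an invariant) and injective, since $\Omega$ is invertible in $\cS(N)[\Delta\inv]$ and $\cS(N)\hookrightarrow\cS(N)[\Delta\inv]$. Because $\Omega$ is a $\gl_N$-highest weight vector of weight $(2n+1)\omega_m$ and the raising operators $E_{ij}$ with $i<j$ are derivations, $m_\Omega$ carries a highest weight vector of weight $\lambda$ to one of weight $\nu:=\lambda+(2n+1)\omega_m$. The multiplicity-free decomposition of the target from Theorem \ref{lem:hwv-even} forces multiplicity-freeness of the source, and the admissibility of $\nu$ (even partition with $\nu_{m+1}\le 2n$) translates to the conditions $\lambda=\omega_m+2\sum_i\ell_i\omega_i$ and $\lambda_{m+1}\le 2n$. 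By multiplicity one in the target, for each admissible $\lambda$ one has $\Omega A_\lambda=c_\lambda D_\nu$ for some nonzero scalar; therefore the existence of a pseudo-invariant highest weight vector $A_\lambda\in\cS(N)^{\OSp(V),\det}$ of weight $\lambda$ is equivalent to $D_\nu/\Omega\in\cS(N)$.

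A direct manipulation using $\Omega^2=D(m)^{2n+1}$ together with $\nu=2(n+1)\omega_m+2\sum_i\ell_i\omega_i$ yields the explicit formula
\[
D_\nu/\Omega=\Delta F\cdot D(m)^{\ell_m}\prod_{i\ne m}D(i)^{\ell_i}\quad\text{in }\cS(N)[\Delta\inv].
\]
When $\ell_m\ge n$, Lemma \ref{lem:regular-singular} gives $\Delta F\cdot D(m)^{\ell_m}\in\cS(N)$, so $D_\nu/\Omega\in\cS(N)$; this accounts for the admissible weights with $\lambda_m-\lambda_{m+1}\ge 2n+1$. The main obstacle is the remaining case $\ell_m<n$ with $\sum_{i\ge m}\ell_i\ge n$ (equivalently, $\lambda_m\ge 2n+1$ but $\lambda_m-\lambda_{m+1}<2n+1$): here $\Delta F\cdot D(m)^{\ell_m}\notin\cS(N)$ by Lemma \ref{lem:regular-singular}, yet the factors $D(i)^{\ell_i}$ for $i>m$ (whose leading terms vanish, since $\pi(D(i))=0$ for $i>m$) must supply exactly the right cancellation of $\Delta$-denominators for the product to land in $\cS(N)$. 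I would establish this by extending the specialization homomorphism $R$ of Lemma \ref{lem:hw-vectors} to a suitable localization and tracking the image of the displayed formula, generalizing the proof of Lemma \ref{lem:regular-singular}: the delicate algebraic identity making the cancellation work reveals precisely when $\cS(N)$-membership holds, matching the condition $\lambda_m\ge 2n+1$ of the theorem and simultaneously ruling out the range $1\le\lambda_m\le 2n-1$.
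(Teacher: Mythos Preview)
Your reduction via multiplication by $\Omega$ is exactly the paper's opening move, and your identification of $A_\lambda$ with $\Delta F\,D_\mu$ (where $\mu=\lambda-\omega_m$) matches the paper verbatim. The divergence is in how the two directions are completed.

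\textbf{Necessity ($\lambda_m\ge 2n+1$ is required).} The paper does not attack $\Delta F\,D_\mu$ directly via specialisation. Instead it first applies the Laplacian $\nabla^{(\mu)}=\prod_{k>m}(\partial_{kk}^2)^{\mu_k/2}$, which is a differential operator on $\cS(N)$ and hence preserves $\cS(N)$; by \eqref{eq:truncating-D} this strips off all factors $D(i)^{\ell_i}$ with $i>m$ and produces, up to the nonzero scalar $C(\mu)$, the element $\tilde A=\Delta F\,D(m)^{\mu_m/2}\prod_{i<m}D(i)^{j_i}$. Only then is the specialisation $\xi$ of Lemma \ref{lem:regular-singular} applied to $\tilde A$, where the analysis is exactly as you know it. Your proposal to specialise the full product directly, with the nilpotent factors $D(i)^{\ell_i}$ ($i>m$) still present, is workable in principle but substantially messier; the Laplacian trick is the missing simplifying step.

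\textbf{Sufficiency (all such $\lambda$ occur).} Here there is a genuine gap. You propose to show $\Delta F\,D_\mu\in\cS(N)$ in the delicate range $\ell_m<n\le\sum_{i\ge m}\ell_i$ by ``tracking the image'' under an extension of $R$ to a localisation. But a specialisation homomorphism can only witness \emph{non}-membership: if $R(f)\in\Lambda_{N-m}$, this says nothing about whether $f\in\cS(N)$. No amount of specialisation will establish the required cancellation of $\Delta$-denominators. The paper bypasses this difficulty entirely by a different mechanism: since $\lambda_m\ge 2n+1$ forces $V^\lambda$ to be typical, one checks that the top $\gl(V)_{\bar 0}$-component $(V^\lambda)^0$ contains a nonzero $\osp(V)_{\bar 0}$-invariant transforming by $\det$ under $\OSp(V)_0$ (this is classical $\mathfrak{so}\times\mathfrak{sp}$ invariant theory), and then invokes Lemma \ref{lem:construct} (the integral $\cI_{\cS(N)}$) to produce a genuine nonzero element of $(V^\lambda)^{\OSp(V),\det}$. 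This construction lives entirely inside $\cS(N)$ and never touches the localisation, so the membership question does not arise.
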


\begin{proof} The Howe duality decomposition \eqref{eq:Howe} requires that
$\lambda$ be a partition of length $\le N$ within the $(m, 2n)$-hook.
Let $A$ be a $\gl_N$ highest weight vector in $\cS(N)^{\OSp(V), \det}$.
Then $\Omega  A$ is a highest weight vector in $\cS(N)^{\OSp(V)}$, and
by weight considerations, it must be equal to a nonzero scalar multiple of  $D(m)^{n+1} D_\mu$ for some
\begin{eqnarray}\label{eq:Dmu}
D_\mu=\prod_{i=1}^N D(i)^{j_i}, \quad  j_i\in\Z_+.
\end{eqnarray}
Hence $A= c\Delta F D_\mu$ for some scalar $c\ne 0$.
This shows that $\cS(N)^{\OSp(V), \det}$ is multiplicity free as a $\gl_N$-module.

The weight $\lambda$ of $A$ automatically satisfies all the conditions of the theorem, except for
$\lambda_m=\mu_m+1\ge 2n+1$.  We claim that if $\mu_m< 2n$, then $A$ can not be an element of $\cS(N)$.
{
To prove this, we apply the method used in the proof of Lemma \ref{lem:hw-vectors}. Using
\eqref{eq:truncating-D}, we obtain $\nabla^{(\mu)}(\Delta F D_\mu)=C(\mu) \tilde{A}$ with
\[
\tilde{A}=\Delta F  D(m)^{\ell_m}\prod_{i=1}^{m-1} D(i)^{j_i},
\]
where $\ell_m=\mu_m/2$.
The scalar $C(\mu)$ is nonzero by reasoning like that which leads to \eqref{eq:coeff-C}.
The claim now follows if we can show that $\tilde{A}\not\in\cS(N)$.

To see this, consider the superalgebra homomorphism $\xi:\cS[\Delta\inv]\lr S(W)[t\inv]$
introduced in the proof of Lemma \ref{lem:regular-singular}.
It yields
\[\xi(\Delta F) = (t^2 +\phi_{m m})^{1/2}=t \left(1+\frac{\phi_{m m}}{t^2}\right)^{1/2},\]
where the square root is understood as a Taylor polynomial in $\frac{\phi_{m m}}{t^2}$.
We now have
\begin{eqnarray}
\xi(\tilde{A}) =(t^2 +\phi_{m m})^{1/2+\ell_m}=t^{1+\mu_m} \left(1+\frac{\phi_{m m}}{t^2}\right)^{1/2+\ell_m}.
\end{eqnarray}
The expansion of $\xi(\tilde{A})$ in $S(W)[t\inv]\simeq\oplus_{i\in\Z}t^i\ot\langle\theta^\mu_j\rangle$
includes the nonzero term $t^{1+\mu_m}  (\frac{\phi_{m m}}{t^2})^n=t^{1-2n+2\ell_m}(\phi_{m m})^n$,
which does not belong to $S(W)$ if $\mu_n< 2n$.

This proves the necessity of the condition $\mu_m\ge 2n$.
}

If $\lambda=(\lambda_1,  \lambda_2, \dots, \lambda_N)$ satisfies all the given conditions in the
theorem, $V^\lambda$ is typical  as a $\gl(V)$-module as $\lambda_m\ge 2n+1$.
Consider its top component $(V^\lambda)^0$ as a  $\gl(V)_{\bar0}$-module.
It is isomorphic to the tensor product of the simple $\gl(V_{\bar0})$-module with highest weight
$\mu=(\lambda_1,  \lambda_2, \dots, \lambda_m)$
and the simple $\gl(V_{\bar1})$-module with highest weight $\nu$,
where $\nu$ is the transpose partition of $(\lambda_{m+1},  \lambda_2, \dots, \lambda_N)$.
Here $\mu=(1, 1, \dots, 1)+\text{even partition}$, and all columns of $\nu$ are even.
Using the invariant theory of $\osp(V)_{\bar0}=\mathfrak{so}(V_{\bar0})\times \mathfrak{sp}(V_{\bar1})$, we see that
$(V^\lambda)^0$ contains a $1$-dimensional subspace of $\osp(V)_{\bar0}$-invariants,
which are pseudo invariants of $\OSp(V)_0$.
Now invoking  Lemma \ref{lem:construct}, we see that $(V^\lambda)^{\OSp(V), \det}\ne 0$.
This completes the proof.
\end{proof}

The proof  above also establishes the following fact.
\begin{corollary}\label{cor-eq:hwvs}
The set
$
\Xi:=\{\Delta F D_\mu\mid \mu_m\ge 2n\ge \mu_{m+1}\}
$
contains a nonzero highest weight vector of every simple $\gl_N$-submodule of
$\cS(N)^{\OSp(V),\det}$.
\end{corollary}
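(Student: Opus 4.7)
The plan is to extract Corollary \ref{cor-eq:hwvs} directly from the constructive part of the proof of Theorem \ref{thm:pseudo-inv}, recasting the characterisation of highest weight vectors there in the parameter $\mu$ rather than $\lambda$. The key ingredient already established is that every $\gl_N$ highest weight vector $A \in \cS(N)^{\OSp(V),\det}$ is, up to a nonzero scalar, of the form $\Delta F D_\mu$ for some $D_\mu = \prod_{i=1}^N D(i)^{j_i}$ (with $j_i\in\Z_+$); combined with the decomposition in Theorem \ref{thm:pseudo-inv}, this means each simple summand $L_\lambda$ of $\cS(N)^{\OSp(V),\det}$ has a highest weight vector of this form. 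I only need to translate the conditions on $\lambda$ supplied by Theorem \ref{thm:pseudo-inv} into the conditions on $\mu$ that define $\Xi$.

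First I would compute the $\gl_N$-weight of $\Delta F D_\mu$. Since $(\Delta F)^2 = \det Q = D(m)$ has weight $2\omega_m$, and $\Delta F$ is a homogeneous $\gl_N$-eigenvector (being invertible in $\cS(N)[\Delta\inv]$), the element $\Delta F$ has $\gl_N$-weight $\omega_m$. Combined with the weight $\sum_i 2 j_i\omega_i = \mu$ of $D_\mu$, this gives the weight of $\Delta F D_\mu$ as $\lambda := \omega_m + \mu$. Reading off coordinates, $\lambda_k = \mu_k + 1$ for $k\le m$ and $\lambda_k = \mu_k$ for $k>m$.

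Next I would match the conditions. Theorem \ref{thm:pseudo-inv} asserts that the highest weights $\lambda$ occurring in $\cS(N)^{\OSp(V),\det}$ are exactly those of the form $\omega_m + \sum 2\ell_i\omega_i$ with $\lambda_m \ge 2n+1$ and $\lambda_{m+1}\le 2n$. Using the coordinate relation above, these become $\mu_m \ge 2n$ and $\mu_{m+1}\le 2n$, i.e. $\mu_m \ge 2n \ge \mu_{m+1}$, which is precisely the defining condition of $\Xi$.

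Finally I would conclude by combining these ingredients: on the one hand, the proof of Theorem \ref{thm:pseudo-inv} shows that any highest weight vector in a simple summand of $\cS(N)^{\OSp(V),\det}$ is a scalar multiple of some $\Delta F D_\mu \in \cS(N)$, so the corresponding $\mu$ necessarily lies in the range defining $\Xi$; on the other hand, Theorem \ref{thm:pseudo-inv} guarantees that for each admissible $\mu$ the module $L_{\omega_m+\mu}$ actually occurs, so the associated $\Delta F D_\mu$ is a \emph{nonzero} highest weight vector of it. The one point requiring a line of verification, rather than pure bookkeeping, is that $\Delta F D_\mu$ genuinely lies in $\cS(N)$ (a priori it lives only in $\cS(N)[\Delta\inv]$); this however is immediate from (a) above, since the assumed highest weight vector $A \in \cS(N)$ equals $c \Delta F D_\mu$ with $c \ne 0$. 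This matching of the two sides of Theorem \ref{thm:pseudo-inv} is the only step with content, and it is straightforward given what has already been proved.
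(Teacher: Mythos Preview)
Your proposal is correct and follows essentially the same approach as the paper, which simply states that the corollary is established by the proof of Theorem \ref{thm:pseudo-inv}; you have just made explicit the bookkeeping (the weight of $\Delta F$ and the translation $\lambda=\omega_m+\mu$) that the paper leaves implicit. The only minor remark is that your justification for $\Delta F$ having weight $\omega_m$ (``being invertible'') is a bit oblique---it is cleaner to note directly that $\Delta$ has weight $\omega_m$ and $F$ is a polynomial in $\zeta/\Delta^2$, which has weight $0$.
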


\subsection{Generators of $\osp(V)$-invariants}\label{sect:generators}
Finally we show that the space $\Gamma(N)$ of super Pfaffians together with the elementary invariants
$q_{i j}$ ($i, j=1, 2, \dots, N$) generate all the $\osp(V)$-invariants in $\cS(N)$.

\begin{theorem}\label{thm:main2}\label{thm:generating-set}
As a $\cS(N)^{\OSp(V)}$-module, $\cS(N)^{\OSp(V), \det}=\Gamma(N)\cS(N)^{\OSp(V)}$.
\end{theorem}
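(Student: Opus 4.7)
The inclusion $\Gamma(N)\cdot\cS(N)^{\OSp(V)}\subseteq\cS(N)^{\OSp(V),\det}$ is immediate from $\det(g)^2=1$, and the left hand side is a $\gl_N$-submodule because $\gl_N$ acts by super-derivations via \eqref{eq:diff-gl}, yielding $E_{ij}(\gamma r)=(E_{ij}\gamma)\,r+\gamma\,(E_{ij}r)\in\Gamma(N)\cdot\cS(N)^{\OSp(V)}$. Since $\cS(N)^{\OSp(V),\det}$ is multiplicity-free as a $\gl_N$-module by Theorem \ref{thm:pseudo-inv}, the reverse inclusion reduces to showing each simple summand $L_\lambda$ contains a non-zero element of $\Gamma(N)\cdot\cS(N)^{\OSp(V)}$. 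By Corollary \ref{cor-eq:hwvs}, $L_\lambda$ has a highest weight vector $A=\Delta F D_\mu$ with $\mu_m\ge 2n\ge\mu_{m+1}$; write $D_\mu=\prod_i D(i)^{\ell_i}$, $\ell_i=(\mu_i-\mu_{i+1})/2$.

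If $\ell_m\ge n$, equivalently $\mu_m-\mu_{m+1}\ge 2n$, then $\mu-2n\omega_m$ is a valid even partition with $(m{+}1)$-st part $\le 2n$, so $D_{\mu-2n\omega_m}\in\cS(N)^{\OSp(V)}$ is a highest weight vector, and $A=\Omega\cdot D_{\mu-2n\omega_m}$ directly gives $A\in\Omega\cdot\cS(N)^{\OSp(V)}$, whence all of $L_\lambda$ lies in the $\gl_N$-submodule $\Gamma(N)\cdot\cS(N)^{\OSp(V)}$; this easy case mirrors the $N=m$ argument of Theorem \ref{thm:critical}. The hard case is $\ell_m<n$, which forces $\mu_{m+1}>0$; here $A$ cannot be written as $\Omega\cdot(\text{single invariant})$. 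The plan is to express
\begin{equation*}
A=\sum_\alpha(u_\alpha\Omega)\cdot r_\alpha
\end{equation*}
for suitable $u_\alpha\in\U(\gl_N)$ (products of lowering operators $E_{k,m}$, $k>m$, so that $u_\alpha\Omega$ is a weight vector in $\Gamma(N)$) and $r_\alpha\in\cS(N)^{\OSp(V)}$. As a small illustration, for $m=n=1$, $N=2$, $\lambda=(3,2)$ one verifies $A=\Omega\,q_{22}-\frac{1}{3}(E_{21}\Omega)\,q_{12}$, with $\Omega=x_1^3+\frac{3}{2}x_1\phi_{11}$.

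The existence of such a decomposition in general reduces to two assertions: (i) the $\gl_N$-tensor product $\Gamma(N)\otimes L_\nu$ contains $L_\lambda$ as a direct summand for some even partition $\nu$ with $\nu_{m+1}\le 2n$, which is a Littlewood--Richardson calculation for the rectangular partition $(2n+1)^m$; and (ii) the multiplication $\Gamma(N)\otimes L_\nu\to\cS(N)$ does not annihilate this $L_\lambda$-summand. The main obstacle is (ii). I would verify it using the leading-term homomorphism $\pi$ of \eqref{eq:pi}: $\pi$ is $\gl_N$-equivariant with $\pi(\Omega)=\Delta^{2n+1}$, so $\pi(\Gamma(N))$ realises $L_{(2n+1)\omega_m}$ inside $S((V^N)_{\bar 0})$; the classical first fundamental theorem for $\SO(V_{\bar 0})$-pseudo-invariants (cf.\ \cite[Appendix F]{FH}) gives $S((V^N)_{\bar 0})^{\OSp(V)_0,\det}=\Delta\cdot S((V^N)_{\bar 0})^{\OSp(V)_0}$, which equals $\pi(\Gamma(N))\cdot\pi(\cS(N)^{\OSp(V)})$ via the identity $\Delta^{2n+1}=\Delta\cdot\Delta^{2n}$ and the fact that $\Delta^{2n}$ is $\OSp(V)_0$-invariant. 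Pulling back through $\pi$ by induction on the $\Z_+$-grading of $\cS(N)$ then lifts this leading-term surjectivity to $\cS(N)$, placing $L_\lambda$ inside $\Gamma(N)\cdot\cS(N)^{\OSp(V)}$ and completing the proof.
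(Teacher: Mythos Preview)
Your reduction to the two assertions (i) and (ii) is sound, and step (i) is in the spirit of what the paper also does (a Littlewood--Richardson comparison, though between $\gl_N$ and $\gl_s$ rather than within $\gl_N$). The gap is in your proposed verification of (ii) via the leading-term map $\pi$. Observe that for $k>m$ one has $\pi(D(k))=\det P(k)=0$, since $P(k)$ is the Gram matrix of $k$ vectors in $\C^m$ and hence has rank $\le m$. Consequently, whenever $\mu_{m+1}>0$ (which is \emph{exactly} your ``hard case'' $\ell_m<n$, given $\mu_m\ge 2n$), the highest weight vector $A=\Delta F D_\mu$ satisfies $\pi(A)=0$. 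More generally, the $\gl_N$-isotypic component of $S((V^N)_{\bar0})^{\OSp(V)_0,\det}$ for any weight $\lambda$ with $\lambda_{m+1}>0$ is zero, because $\SO(V_{\bar 0})$-pseudo-invariants live in $\GL_m$-modules of length $\le m$. Thus $\pi$ annihilates precisely the summands you need to detect, and no ``induction on the $\Z_+$-grading'' recovers the lost information: the obstruction is not that $\pi$ fails to be surjective, but that it kills the entire $L_\lambda$-component of $\cS(N)^{\OSp(V),\det}$ in the hard case.

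The paper circumvents this by using a different specialisation, the map $R$ of \eqref{eq:specialization-map}, which does the opposite of $\pi$: it kills the even coordinates $x^a_t$ for $t>m$ and the odd coordinates $\theta^\mu_t$ for $t\le m$, but \emph{retains} the odd variables $\theta^\mu_t$ for $t>m$. This reduces the non-vanishing question to the skew invariant theory of $\mathfrak{sp}(V_{\bar1})$ on the Grassmann algebra $\Lambda_s$ (Corollary~\ref{thm:sp}), where the weights $\nu=(\lambda_{m+1},\dots,\lambda_N)$ with $\nu_1\le 2n$ appear nontrivially. The inductive step (Assertion~\ref{fact2}) then lifts non-vanishing from $\Lambda_s$ back to $\cS(N)$ by matching the relevant Littlewood--Richardson constituents of $\gl_s$ and $\gl_N$. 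The moral is that the ``hard'' pseudo-invariants are supported on the Grassmann part of $\cS(N)$, so the correct reduction is to symplectic, not orthogonal, invariant theory.
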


Let us make some preparations for the proof of this theorem.
We claim that the highest weight vectors in
Corollary \ref{cor-eq:hwvs} can all be expressed in terms of elements of $\Gamma(N)$ and $q_{i j}$.
This claim clearly implies the theorem.
Let \[
\Xi':=\{\Delta F D_\mu\mid \mu_1=\dots=\mu_m=2n\ge \mu_{m+1}\}.
\]
The following fact is straightforward.
\begin{fact}\label{fact1}
For any $A\in\Xi$ with weight $\lambda$ such that $\lambda_1>2n+1$,
there exist a product $D$ of powers of $D(i)$ with $i\le m$, and an element $B$ of $\Xi'$ such that $A=B D$.
\end{fact}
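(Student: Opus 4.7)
The plan is to construct $B$ and $D$ by explicit redistribution of the exponents in $D_\mu = \prod_{i=1}^N D(i)^{\ell_i}$. Write $A = \Delta F D_\mu$ with $\mu = 2\sum_i \ell_i \omega_i$; since $\Omega = \Delta F D(m)^n$ has weight $(2n+1)\omega_m$, the factor $\Delta F$ carries weight $\omega_m$, so that the defining inequalities $\mu_m \ge 2n \ge \mu_{m+1}$ for $A\in\Xi$ translate to $\sum_{i\ge m}\ell_i \ge n \ge \sum_{i>m}\ell_i$.

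The shape of $B\in\Xi'$ is then essentially forced. Writing $B = \Delta F D_{\mu'}$ with $\mu' = 2\sum_i \ell'_i\omega_i$, the requirement $\mu'_1 = \cdots = \mu'_m = 2n$ forces $\ell'_i = 0$ for $i<m$; and since $D$ is required to involve only $D(i)$ with $i\le m$, one must have $\ell'_i = \ell_i$ for $i>m$. The remaining value $\ell'_m = n - \sum_{i>m}\ell_i$ is then fixed by $\mu'_m = 2n$. Taking $D = \prod_{i\le m} D(i)^{a_i}$ with $a_i = \ell_i - \ell'_i$, the identity $A = BD$ follows immediately, since $\Delta F$ and all the $D(i)$ are even elements and hence commute.

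The only thing to verify is that the exponents $\ell'_m$ and $a_i$ are all nonnegative, and this is precisely the content of the two defining inequalities of $\Xi$: $\ell'_m \ge 0$ is equivalent to $\mu_{m+1}\le 2n$; the values $a_i = \ell_i \ge 0$ for $i<m$ hold because $\mu$ is a partition; and $a_m = \mu_m/2 - n \ge 0$ is equivalent to $\mu_m \ge 2n$. The hypothesis $\lambda_1 > 2n+1$, i.e. $\mu_1 > 2n$, is not strictly needed for the construction itself; it merely guarantees that the factorization is nontrivial, since $\lambda_1 = 2n+1$ already implies $\mu_1 = \cdots = \mu_m = 2n$ and hence $A\in\Xi'$ with $D=1$. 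There is no real obstacle, which is why the authors label the statement as straightforward: the inequalities built into $\Xi$ are exactly what is required to peel off the even $\OSp(V)$-invariant factor $D$ and leave a pseudo invariant with weight in $\Xi'$.
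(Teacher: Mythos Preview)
Your argument is correct and is exactly the straightforward verification the authors had in mind; the paper gives no proof beyond the word ``straightforward'', and your explicit redistribution of exponents $\ell'_i=0$ for $i<m$, $\ell'_m=n-\sum_{i>m}\ell_i$, $\ell'_i=\ell_i$ for $i>m$ is the natural one. Your remark that the hypothesis $\lambda_1>2n+1$ only serves to make the factorisation nontrivial is also accurate.
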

Thus our problem reduces to proving the above claim for the elements of $\Xi'$.
This will be done by further reducing the problem to one in the skew invariant theory of $\mathfrak{sp}(V_{\bar1})$,
that is, invariant theory in the setting of Grassmann algebras $\wedge(V_{\bar1}\otimes \C^s)$.
The main facts of   the theory are collected in \cite[\S3]{T}, which can also be easily deduced from Theorem \ref{lem:hwv-even}.
Consider $\cS(N)$ for $N=s+m$, and recall the specialisation homomorphism $R: \cS(N)\longrightarrow \Lambda_s$ defined
by \eqref{eq:specialization-map}.  The following result is a special case of Theorem \ref{lem:hwv-even}.
\begin{corollary}\label{thm:sp}
The subalgebra $(\Lambda_s)^{\mathfrak{sp}(V_{\bar1})}$ of invariants
is generated by the elements $\phi_{i j}=R(q_{i j})$ with $m+1\le i, j\le N$. As a $\gl_s$-module,
\[
 (\Lambda_s)^{\mathfrak{sp}(V_{\bar1})}=\bigoplus_\lambda \tL_\lambda,
\]
where $\tL_\lambda$ is the simple $\gl_s$-module with highest weight $\lambda$, and the sum is over all  even partitions
$\lambda$ of lengths $\le s$ satisfying the condition $\lambda_1\le 2n$.  Furthermore, the highest weight vector of
$\tL_\lambda$ is $\prod_{i=1}^s R(D(m+i))^{\ell_i}$ with $\lambda=2\sum_{i=1}^s\ell_i\omega_{i}$.
\end{corollary}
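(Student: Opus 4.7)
The plan is to deduce this as a direct specialization of Theorem \ref{lem:hwv-even} to the ``purely odd'' situation.  Let me explain how I would set this up.

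First, I would identify $\Lambda_s$ with the coordinate super-ring of the dual of $V_{\bar 1}\otimes \C^s$ in the sense of Section \ref{subsect:ivariants}, but applied to the superspace $V' := \C^{0|2n}$ (that is, the case with $m$ replaced by $0$ and with $V_{\bar 1}$ playing the role of the natural module for $\osp(V')=\mathfrak{sp}(V_{\bar 1})$).  Under this identification, the odd variables $\theta^\mu_{m+t}$ ($1\le t\le s$, $1\le \mu\le 2n$) become the standard coordinates, the commuting actions of $\mathfrak{sp}(V_{\bar 1})$ and $\gl_s$ on $\Lambda_s$ match the actions of $\osp(V')$ and $\gl_s$ on the coordinate ring $\cS'(s)$ described in Section \ref{subsect:ivariants}, and the elements $\phi_{i j}=R(q_{i j})$ with $m+1\le i,j\le N$ correspond (via the reindexing $i\mapsto i-m$) to the elementary invariants $q'_{i j}$ of $\OSp(V')$ in the primed setup.

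Next I would apply Theorem \ref{lem:hwv-even} to $\cS'(s)$.  Since $V'_{\bar 0}=0$, we have $\OSp(V')_0=\Sp(V_{\bar 1})$, and in particular this group is connected, so the determinant character is trivial and the pseudo-invariant summand of the previous section does not appear.  Hence $\cS'(s)^{\mathfrak{sp}(V_{\bar 1})}=\cS'(s)^{\Sp(V_{\bar 1})}$, and by the FFT for $\OSp(V')$ this subalgebra is generated by the $q'_{i j}$, i.e.\ by the $\phi_{i j}$.  The hook condition $\lambda_{m+1}\le 2n$ of Theorem \ref{lem:hwv-even} becomes $\lambda_1\le 2n$ (since here $m=0$), and Theorem \ref{lem:hwv-even} then gives the multiplicity-free decomposition $\cS'(s)^{\mathfrak{sp}(V_{\bar 1})}=\bigoplus_\lambda \tL_\lambda$ indexed by even partitions $\lambda$ of length $\le s$ with $\lambda_1\le 2n$, with highest weight vector $D'_\lambda=\prod_{i=1}^s D'(i)^{\ell_i}$ where $D'(i)=\det Q'(i)$.

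Finally, I would translate back via $R$.  Since $R(Q(k))$ has block form $\begin{pmatrix}I_m & 0\\ 0 & \Psi(k)\end{pmatrix}$ for $k>m$, we get $R(D(m+i))=\det\Psi(m+i)=D'(i)$, so $D'_\lambda=\prod_{i=1}^s R(D(m+i))^{\ell_i}$, which matches the formula claimed.  Because the whole argument is a routine translation through the identification $\Lambda_s\simeq \cS'(s)$, there is no genuine obstacle here; the only step requiring care is checking that the $\gl_s$-action on $\Lambda_s$ obtained by restricting the $\gl_N$-action on $\cS(N)$ to indices $\{m+1,\dots,N\}$ is intertwined by $R$ with the $\gl_s$-action on $\cS'(s)$, which is immediate from the explicit formulas \eqref{eq:diff-gl} and the definition of $R$.
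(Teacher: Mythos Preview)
Your proposal is correct and follows exactly the route the paper indicates: the paper states that Corollary~\ref{thm:sp} ``is a special case of Theorem~\ref{lem:hwv-even}'' and gives no further argument, so your explicit specialisation to $V'=\C^{0|2n}$ (where the hook condition becomes $\lambda_1\le 2n$ and $\OSp(V')_0=\Sp(V_{\bar1})$ is connected so that pseudo-invariants disappear), together with the identification $R(D(m+i))=\det\Psi(m+i)=D'(i)$, is precisely the intended proof.
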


Denote by $L(B)$ the simple $\gl_N$-module generated by the highest weight vector
$B\in\Xi'$, and let $Q$ be the liner span of the elements $q_{i j}$ ($1\le i, j\le N$).  Then $L(B)Q$ is a semi-simple $\gl_N$-module. We let $Tr(L(B)Q)$ be the minimal
submodule which contains every simple submodule of $L(B)Q$ that is isomorphic to some $L(A)$ with $A\in\Xi'$.
Recall from \eqref{eq:Psi} that the matrix $\Psi(N)$ has entries $\phi_{i j}$ ($m+1\le i, j\le N$).
We shall also denote the span of its entries by $\Psi(N)$. 

For any $A\in\Xi'$, its specialisation $R(A)\in \Lambda_s$ is an $\mathfrak{sp}(V_{\bar1})$ highest weight vector. We denote 
 by $\tL(R(A))$ the $\mathfrak{sp}(V_{\bar1})$-submodule of $\Lambda_s$ generated by $R(A)$. 

\begin{fact}\label{fact2}
For any $A, B\in\Xi'$, if $\tL(R(A))\subset \tL(R(B))\Psi(N)$, then $L(A)\subset Tr(L(B)Q)$.
\end{fact}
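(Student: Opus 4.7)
\noindent\textbf{Proof plan for Fact \ref{fact2}.}

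The plan is to lift the Grassmann-level relation $\tL(R(A))\subset \tL(R(B))\Psi(N)$ back to $\cS(N)$ using the specialisation map $R$, and then to identify the lifted submodule with $L(A)$ using the multiplicity-freeness of $\cS(N)^{\OSp(V),\det}$. First I would verify that $R:\cS(N)\to\Lambda_s$ is a $\gl_s$-equivariant superalgebra homomorphism, where $\gl_s\hookrightarrow\gl_N$ acts on the last $s=N-m$ coordinates; this follows by checking the action on generators. Note in particular that $R(q_{ij})=\phi_{ij}$ whenever $m<i,j\le N$, and that $\tL(R(B))$, being generated by $R(B)$ under the $\gl_s$-action, lies in $R(L(B))$ by equivariance.

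Now write the hypothesis explicitly: there exist $c_\alpha\in\C$, $y_\alpha\in\tL(R(B))$, and indices $m<i_\alpha,j_\alpha\le N$ such that $R(A)=\sum_\alpha c_\alpha y_\alpha\phi_{i_\alpha j_\alpha}$. Lift each $y_\alpha$ to some $b_\alpha\in L(B)$ with $R(b_\alpha)=y_\alpha$, and form
\[
A^\sharp:=\sum_\alpha c_\alpha b_\alpha q_{i_\alpha j_\alpha}\in L(B)Q\subset \cS(N)^{\OSp(V),\det}.
\]
Since $R$ is a superalgebra homomorphism, $R(A^\sharp)=R(A)\ne 0$.

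Next, decompose $L(B)Q=\bigoplus_i L(C_i)$ as a direct sum of simple $\gl_N$-modules with distinct highest weights, which is possible by multiplicity-freeness (Theorem \ref{thm:pseudo-inv}) and by Corollary \ref{cor-eq:hwvs}, which identifies each $C_i$ with an element of $\Xi$. Writing $A^\sharp=\sum_i A^\sharp_i$ with $A^\sharp_i\in L(C_i)$, I would apply $R$ to obtain $R(A)=\sum_i R(A^\sharp_i)$ inside $(\Lambda_s)^{\mathfrak{sp}(V_{\bar1})}$, which is itself multiplicity free by Corollary \ref{thm:sp}. Since $R(A)$ is the $\gl_s$-highest weight vector of $\tL(R(A))$, at least one summand $R(A^\sharp_{i_0})$ must generate an isomorphic copy of $\tL(R(A))$ as a $\gl_s$-module. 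The crucial observation, proved by direct computation using the explicit form of $R(\Delta F D_\mu)$ and Corollary \ref{thm:sp}, is that the $\gl_s$-highest weight of $R(\Delta F D_\mu)$ for $\mu\in\Xi'$ (where $\mu_1=\dots=\mu_m=2n$) uniquely recovers the partition $(\mu_{m+1},\mu_{m+2},\dots,\mu_N)$, and hence recovers $\mu$ itself. Applied to $C_{i_0}$, this forces $C_{i_0}$ to have the same $\gl_N$-highest weight as $A$, hence $L(C_{i_0})\cong L(A)$. Finally, since $C_{i_0}\in\Xi'$, this copy of $L(A)$ contributes to $Tr(L(B)Q)$, completing the proof.

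\noindent\textbf{Main obstacle.} The delicate step is the injectivity claim in the last paragraph: different highest weight vectors in $\Xi'$ must be distinguished by the $\gl_s$-highest weight of their images under $R$. This reduces to an explicit, but nontrivial, identification: $R(\Delta F D_\mu)$ should equal the product of $R(\Delta F)=\sqrt{\det\Psi(N)}$ (a fixed $\gl_s$-highest weight vector) with $\prod_{k>m}\det\Psi(k)^{\ell_k}$, whose $\gl_s$-highest weight is $\sum_{k>m}2\ell_k\omega_{k-m}$ and therefore recovers the non-$m$ part of $\mu$. Combined with the $\Xi'$ normalisation on the first $m$ coordinates, this yields the required distinctness. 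Everything else in the argument is formal manipulation of the lifting construction and multiplicity-freeness.
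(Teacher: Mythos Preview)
There is a genuine gap in the passage from ``$R(A^\sharp_{i_0})$ generates a copy of $\tL(R(A))$'' to ``$C_{i_0}$ has the same $\gl_N$-highest weight as $A$''. Your injectivity claim concerns the images $R(C)$ of the \emph{highest weight vectors} $C\in\Xi'$, but $A^\sharp_{i_0}$ is an arbitrary element of $L(C_{i_0})$, not its highest weight vector $C_{i_0}$. To make the deduction you would need $R(L(C_{i_0}))=\tL(R(C_{i_0}))$, i.e.\ that $R$ collapses each relevant simple $\gl_N$-submodule onto a single simple $\gl_s$-module; but the restriction of $L(C_{i_0})$ to $\gl_s$ typically has many constituents, and there is no reason for $R$ to annihilate all but one of them. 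Concretely, nothing rules out that some $C_i\in\Xi\setminus\Xi'$ (say with first part $2n+3$) satisfies $\tL(R(A))\subset R(L(C_i))$, in which case your argument cannot exclude $A^\sharp$ lying entirely in such summands. (Incidentally, $R(\Delta F)=1$ since $R(Q(m))=I_m$, not $\sqrt{\det\Psi(N)}$; your injectivity statement about $R$ on highest weight vectors of $\Xi'$ is nonetheless correct, just with this simpler formula.)

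The paper's argument avoids this by working with the tensor products $L(B)\otimes Q$ and $\tL(R(B))\otimes\Psi(N)$ \emph{before} multiplying. Using the Littlewood--Richardson rule it establishes a bijection between the simple summands $L_\lambda\subset L(B)\otimes Q$ with $\lambda_1=2n+1$, $\lambda_{m+1}\le 2n$ (precisely the $\Xi'$ condition) and the simple summands $\tL_\nu\subset\tL(R(B))\otimes\Psi(N)$ with $\nu_1\le 2n$, via $\lambda\mapsto(\lambda_{m+1},\dots,\lambda_N)$. It then writes down the highest weight vector of each such $L_\lambda$ explicitly in the form $C=B\otimes q_{kk}+\sum_i b_i\,E_{ki}(B)\otimes q_{ik}+\sum_{i,j} b_{ij}\,E_{ki}E_{kj}(B)\otimes q_{ij}$ for some $k>m$, and checks directly that applying the multiplication map and then $R$ (using $R(q_{ir})=0$ for $i\le m<r$) yields exactly the corresponding $\gl_s$-highest weight vector in $\Lambda_s$. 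Thus nonvanishing on the Grassmann side gives nonvanishing on the $\cS(N)$ side for the \emph{matching} highest weight vector, and one never needs to analyse $R(L(C_i))$ as a $\gl_s$-module.
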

\begin{proof}
Consider the set of simple $\gl_N$-submodules in $L(B)\otimes Q$ defined by
\[
K=\{ L_\lambda\subset L(B)\otimes Q \mid  \lambda_1=2n+1, \ \lambda_{m+1}\le 2n\}
\]
and the set of simple  $\gl_s$-submodules in $\tL(R(B))\otimes \Psi(N)$ given by
\[
\tilde{K}=\{ \tL_\nu\subset\tL(R(B))\otimes \Psi(N) \mid \nu_1\le 2n\}.
\]
It follows from the Littlewood-Richardson rules for $\gl_N$ and $\gl_s$ that $K$ and 
$\tilde{K}$ are in canonical bijection via the map
$L_\lambda\mapsto\tL_\nu$ with $\nu=(\lambda_{m+1}, \dots, \lambda_N)$.
The highest weight vector of any simple module in $K$ is of the form
\[
C:=B\otimes q_{k k} + \sum_{1\le i<k} b_i E_{k i}(B)\otimes q_{i k} + \sum_{1\le i\le j<k} b_{i j} E_{k i}E_{k j}(B)\otimes q_{i j}
\]
for some $k>m$, and scalars $b_i$ and $b_{i j}$ (depending on $k$). 
The corresponding simple $\gl_s$-module in $\tilde{K}$ has highest weight vector
\[
\tilde{C}:=R(B)\otimes \phi_{k k} + \sum_{m+1\le i<k} b_i E_{k i}(R(B))\otimes \phi_{i k} +
\sum_{m+1\le i\le j<k} b_{i j} E_{k i}E_{k j}(R(B))\otimes \phi_{i j}.
\]

Let 
\[
\begin{aligned}
&A:=B q_{k k} + \sum_{1\le i<k} b_i E_{k i}(B) q_{i k} + \sum_{1\le i\le j<k} b_{i j} E_{k i}E_{k j}(B) q_{i j}, \\
&\tilde{A}:=R(B) \phi_{k k} + \sum_{m+1\le i<k} b_i E_{k i}(R(B)) \phi_{i k} +
\sum_{m+1\le i\le j<k} b_{i j} E_{k i}E_{k j}(R(B)) \phi_{i j}.
\end{aligned}
\]
that is,  $A=M(C)$ and $\tilde{A}=M_{\Lambda_s}(\tilde{C})$, where
$M$ and $M_{\Lambda_s}$ denote the products in $\cS(N)$ and $\Lambda_s$ respectively
of the highest weight vectors.
Then $\tilde{A}=R(A)$ as all $R(q_{i r})=0$ if $i\le m< r$. Hence $A\ne 0$ if  $\tilde{A}\ne 0$.
\end{proof}

With the above preparations, we now prove the theorem.

\begin{proof}[Proof of Theorem \ref{thm:generating-set}]

It remains only to show that the highest weight vectors given by
Corollary \ref{cor-eq:hwvs} can all be expressed in terms of elements of $\Gamma(N)$ and $q_{i j}$.
We will do this by induction on the size of the highest weights, where the size of a partition
$\mu=(\mu_1, \mu_2, \dots )$ is $|\mu|=\sum_{i} \mu_i$.

The size of any weight of $\Xi$ is not less than $m(2n+1)$. If the size of the weight is
$m(2n+1)$, the corresponding vector in $\Xi$ is $\Omega$ (see Definition \ref{def:Omega}).
 If the size of the weight is $m(2n+2)+2$,
the corresponding vector in $\Xi'$ is $A=\Delta F D(m+1) D(m)^{n-1}$.
Using \eqref{eq:reduce}, we obtain
\[
A= q_{m+1, m+1} \Omega - \frac{1}{2n+1} \sum_{r=1}^m q_{r, m+1} E_{m+1, r}(\Omega) \in \Gamma(N) Q.
\]
There is also one vector in $\Xi\backslash\Xi'$ with a weight of size $m(2n+1)+2$, which
is in  $\Gamma(N)\cS(N)^{\OSp(V)}$ by  Assertion \ref{fact1}. This starts the induction.

For any integer $\ell\ge 0$, let  $W_{2\ell}$  be the homogeneous subspace of $\cS(N)^{\OSp(V), \det}$ of degree
$m(2n+1)+2\ell$. Similarly, let  $\tilde{W}_{2\ell}$ be the homogeneous subspace of
$(\Lambda_s)^{\mathfrak{sp}(V_{\bar1})}$ of degree $2\ell$.
Then $\tilde{W}_{2\ell+2}=\tilde{W}_{2\ell}\Psi(N)$ since it follows from Corollary \ref{thm:sp}
that  $(\Lambda_s)^{\mathfrak{sp}(V_{\bar1})}$ is generated by $\Psi(N)$.
By Assertion \ref{fact2},  corresponding to each $\gl_s$-highest weight vector $\tilde{A}$ in $\tilde{W}_{2\ell+2}$
with weight $\nu=(\nu_1, \nu_2, \dots, \nu_s)$ (which must be even),
there exists a $\gl_N$- highest weight vector $A\in W_{2\ell}Q$ such that $\tilde{A}=R(A)$, where the weight of $A$
is given by $\lambda=(\underbrace{2n+1, \dots, 2n+1}_m, \nu_1, \nu_2, \dots, \nu_s)$.  By Corollary \ref{thm:sp}
and Theorem \ref{thm:pseudo-inv}, these
weights $\lambda$ exhaust all the weights  of $\Xi'$ of size $m(2n+1)+2\ell+2$, and so
all vectors of $\Xi'$ with weights of this size are contained in $W_{2\ell}Q$. By Assertion \ref{fact1},
all elements in $\Xi\backslash\Xi'$  which have weight of this size are also in $W_{2\ell}Q$, thus all $\gl_N$ highest
weight vectors in $W_{2\ell+2}$ belong to $W_{2\ell}Q$. This implies $W_{2\ell}Q=W_{2\ell+2}$.
By the induction hypothesis, all vectors in $\Xi$ with weight of
size $m(2n+1)+2\ell$ are contained in $\Gamma(N)\cS(N)^{\OSp(V)}$. Hence
$W_{2\ell}$ is a subspace of $\Gamma(N)\cS(N)^{\OSp(V)}$,  and so also is $W_{2\ell+2}$.
\end{proof}

\section{Invariants of $\osp(V)$ in tensor powers of $V$}

The above results on $\osp(V)$-invariants in $\cS(N)$ enable one to understand the
$\osp(V)$-invariants in all tensor powers of $V$.
Recall that the $\Z_+^N$-gradation of $\cS(N)$ corresponds to the decomposition
into common eigenspaces of all the operators $E_{r r}$ with $1\le r \le N$ defined by \eqref{eq:diff-gl}, that is,
the weight spaces  in $\cS(N)$ of the $\gl_N$-algebra.
In particular, the space of weight $(1, 1, \dots, 1)$ will be called the zero $\gl_N$- weight space, which is given by
\begin{eqnarray}\label{eq:zerowt}
\cS^{(1, 1, \dots, 1)}(N)=V^{\otimes N}.
\end{eqnarray}

Denote by $D_\mu$ the zero $\gl_N$-weight space of the simple $\gl_N$-module $L_\mu$.
By considering the zero $\gl_N$-weight spaces of \eqref{eq:Howe}, we obtain the following multiplicity free decomposition
\begin{eqnarray}\label{eq:Schur}
\cS^{(1, 1, \dots, 1)}(N)=V^{\otimes N} =\oplus_\mu V^\mu\otimes D_\mu.
\end{eqnarray}

Recall the Schur-Weyl duality between the general linear superalgebra $\gl(V)$
and the symmetric group $\Sym_N$ on $V^{\otimes N}$. By decomposing
$V^{\otimes N}$ into direct sum of simple $\U(\gl(V))\otimes\C\Sym_N$-submodules,
and comparing the decomposition with \eqref{eq:Schur}, we see that
$D_\mu$ is the simple $\Sym_N$-module associated with the partition $\mu$ of $N$,
and the sum in \eqref{eq:Schur} is over all such $\mu$ that $\mu_{m+1}\le 2n$.

\begin{remark}
This implies the known fact \cite{Ko}  that
the zero $\gl_N$-weight space of the simple $\gl_N$-module $L_\mu$ forms a
simple $\Sym_N$-module isomorphic to that associated with the partition $\mu$ of $N$.
\end{remark}

Consider the $\osp(V)$-invariants on the right hand side of \eqref{eq:Schur}.
 The following result is an immediate consequence of Theorems \ref{lem:hwv-even} and \ref{thm:pseudo-inv} .
\begin{theorem} \label{thm:tensor-invs}
As a $\C\Sym_N$-module, the subspace $\left(V^{\otimes N}\right)^{\osp(V)}$ of $\osp(V)$-invariants in $V^{\otimes N}$
decomposes into the direct sum of two submodules,
\[
\left(V^{\otimes N}\right)^{\osp(V)} = \left(V^{\otimes N}\right)^{\OSp(V)}\oplus \left(V^{\otimes N}\right)^{\OSp(V), \det},
\]
which have the following multiplicity free decompositions:
\begin{enumerate}
\item
$
\left(V^{\otimes N}\right)^{\OSp(V)}=\bigoplus_\mu D_\mu ,
$
where the sum is over all  partitions $\mu $ of $N$ which are even and satisfy $\mu_{m+1}\le 2n$;  and
\item
$
\left(V^{\otimes N}\right)^{\OSp(V), \det}=\bigoplus_\lambda D_\lambda,
$
where the sum is over all partitions $\lambda$ of $N$ of the form $\lambda=\omega_m+\sum_{i=1}^N 2\ell_i \omega_i$
with $\ell_i\in\Z_+$ such that
$\lambda_{m+1}\le 2n<\lambda_m$.
\end{enumerate}
\end{theorem}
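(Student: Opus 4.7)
The plan is to deduce Theorem \ref{thm:tensor-invs} directly from Theorems \ref{lem:hwv-even} and \ref{thm:pseudo-inv} by restricting those $\gl_N$-decompositions to the $(1,1,\dots,1)$-weight space, which by \eqref{eq:zerowt} equals $V^{\otimes N}$.

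First I would note that, since the $\osp(V)$- and $\gl_N$-actions on $\cS(N)$ commute, every $\gl_N$-weight space of $\cS(N)$ is $\osp(V)$-stable. Consequently the operation of taking $\osp(V)$-invariants commutes with projection onto any fixed $\gl_N$-weight space, and the same holds with $\OSp(V)$- or $(\OSp(V),\det)$-equivariance imposed. Combined with \eqref{eq:zerowt} this gives
$$
(V^{\otimes N})^{\osp(V)} = \big(\cS(N)^{\osp(V)}\big)_{(1,\dots,1)},
$$
where the subscript denotes the $(1,\dots,1)$ $\gl_N$-weight space. The asserted splitting $(V^{\otimes N})^{\osp(V)} = (V^{\otimes N})^{\OSp(V)} \oplus (V^{\otimes N})^{\OSp(V),\det}$ is then simply the zero $\gl_N$-weight image of the analogous splitting of $\cS(N)^{\osp(V)}$ that was recorded after Remark \ref{rem:elmt-invs}.

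Next, I would apply Theorem \ref{lem:hwv-even} to obtain $\cS(N)^{\OSp(V)} = \bigoplus_\mu L_\mu$, where the sum is over even partitions $\mu$ of length $\le N$ with $\mu_{m+1}\le 2n$, and then project to the $(1,\dots,1)$-weight component of both sides. Since the Euler operator $E=\sum_t E_{tt}$ acts on $L_\mu$ by the scalar $|\mu|$, the component $D_\mu$ is nonzero iff $|\mu|=N$; for such $\mu$, the remark immediately preceding the theorem identifies $D_\mu$ with the Specht $\Sym_N$-module attached to $\mu$. This yields part (1). Part (2) is obtained by running exactly the same argument with Theorem \ref{thm:pseudo-inv} in place of Theorem \ref{lem:hwv-even}: the conditions $\lambda_{m+1}\le 2n<\lambda_m$ are inherited verbatim, and the constraint $|\lambda|=N$ appears automatically from the zero-weight-space restriction.

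There is no serious obstacle once Theorems \ref{lem:hwv-even} and \ref{thm:pseudo-inv} are in hand; the theorem is essentially a formal corollary via zero-weight extraction. The only point warranting any care is the identification of $D_\mu$ with the Specht module, which reflects the compatibility between Howe duality \eqref{eq:Howe} and Schur--Weyl duality and is already covered by the remark preceding the theorem statement.
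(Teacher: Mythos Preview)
Your proposal is correct and follows exactly the paper's approach: the paper states that Theorem \ref{thm:tensor-invs} is ``an immediate consequence of Theorems \ref{lem:hwv-even} and \ref{thm:pseudo-inv}'' and gives no further proof, the implicit argument being precisely the zero-weight-space extraction you have spelled out. Your write-up simply makes explicit the details the paper leaves to the reader.
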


There is another description of $\left(V^{\otimes N}\right)^{\OSp(V)}$ which arises
from the tensor version of the FFT.
Let $\check{C}$ be the canonical invariant in $V\otimes V$, that is, if we write
$\check{C}=\sum_i v_i\otimes \bar v_i$, then $\sum_i v_i (\bar v_i, w)=w$ for all
$w\in V$.
The FFT for $\OSp(V)$ \cite{LZ6} may be stated as follows.
For any $N\in\Z_+$,
\begin{eqnarray}\label{eq:FFT-tensor}
(V^{\otimes N})^{\OSp(V)}=\left\{\begin{array}{l l}
\C\Sym_N(\check{C}^{\otimes\frac{N}{2}}) & \text{if $N$ is even}, \\
0 & \text{otherwise}.
\end{array}
\right.
\end{eqnarray}

Call any simple $\gl(V)$-submodule of $\oplus_{r\ge 0} V^{\otimes r}$ a
simple tensor module.
The following result is clear from the above theorem.
\begin{corollary}\label{cor:osp-gl}
Let $V^\mu$ be a simple tensor module for $\gl(V)$. Then $\dim  (V^\mu)^{\osp(V)}\le 1$, and equality holds if and only if
\begin{itemize}
\item either $\lambda$ is an even partition satisfying $\lambda_{m+1}\le 2n$;
\item or $\lambda=(\underbrace{1, \dots, 1}_m, 0, 0, \dots)+\mu$
for some even partition $\mu$ such that $\mu_{m+1}\le 2n\le \mu_m$.
\end{itemize}
\end{corollary}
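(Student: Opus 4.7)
The plan is to deduce the corollary directly from Howe duality combined with Theorems \ref{lem:hwv-even} and \ref{thm:pseudo-inv}, so the work is essentially bookkeeping.

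First, I would observe that every simple tensor module $V^\mu$ for $\gl(V)$ is indexed by a partition $\mu$ lying in the $(m,2n)$-hook. Fix any such $\mu$ and let $N$ be any integer at least as large as the length of $\mu$. By the Howe duality decomposition \eqref{eq:Howe}, $V^\mu$ appears inside $\cS(N)$ paired with the simple $\gl_N$-module $L_\mu$. Taking $\osp(V)$-invariants commutes with direct sums and with the $\gl_N$-action (since $\osp(V)$ and $\gl_N$ act by commuting operators), so
\[
\cS(N)^{\osp(V)} \;=\; \bigoplus_\nu (V^\nu)^{\osp(V)} \otimes L_\nu,
\]
where the sum runs over partitions $\nu$ in the $(m,2n)$-hook of length $\le N$. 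Hence the multiplicity of $L_\mu$ as a $\gl_N$-submodule of $\cS(N)^{\osp(V)}$ equals $\dim (V^\mu)^{\osp(V)}$.

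Next, I would invoke the splitting $\cS(N)^{\osp(V)} = \cS(N)^{\OSp(V)} \oplus \cS(N)^{\OSp(V),\det}$, and apply Theorems \ref{lem:hwv-even} and \ref{thm:pseudo-inv}. Together these show that $\cS(N)^{\osp(V)}$ is multiplicity free as a $\gl_N$-module, so the multiplicity of $L_\mu$ is at most $1$, giving the bound $\dim (V^\mu)^{\osp(V)} \le 1$ immediately. Equality holds precisely when $\mu$ is one of the highest weights occurring in those two theorems.

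Finally, I would translate the conditions. Theorem \ref{lem:hwv-even} contributes the case where $\mu$ is an even partition with $\mu_{m+1} \le 2n$, which is exactly the first bullet. Theorem \ref{thm:pseudo-inv} contributes weights of the form $\mu = \omega_m + \sum_{i=1}^N 2\ell_i\omega_i$ with $\mu_{m+1} \le 2n$ and $\mu_m \ge 2n+1$. Writing $\mu' := \sum 2\ell_i\omega_i$, this $\mu'$ is an even partition, $\mu = (\underbrace{1,\dots,1}_m,0,\dots) + \mu'$, the condition $\mu_{m+1}\le 2n$ becomes $\mu'_{m+1} \le 2n$, and the condition $\mu_m \ge 2n+1$ becomes $\mu'_m \ge 2n$. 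This matches the second bullet. There is no real obstacle: the only subtlety is making sure the choice of $N$ is immaterial, which is automatic because increasing $N$ only adds more partitions $\nu$ without affecting the multiplicity of any fixed $\mu$ already in range.
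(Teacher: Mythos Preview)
Your proposal is correct and follows essentially the same line as the paper, which simply remarks that the corollary is clear from Theorems~\ref{lem:hwv-even} and~\ref{thm:pseudo-inv} (via the Howe decomposition). The one point you pass over silently is that the two sets of highest weights in those theorems are disjoint (the first consists of even partitions, while in the second $\lambda_1,\dots,\lambda_m$ are odd), which is what guarantees the combined decomposition of $\cS(N)^{\osp(V)}$ is multiplicity free; this is obvious from parity but worth a clause.
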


Recall that in Definition \ref{def:Pfaffian-space} we define the space of super Pfaffians $\Gamma(N)$ for any $N\ge m(2n+1)$.
\begin{definition}
Let $r_c=m(2n+1)$, and denote by $\Gamma^0$ the zero $\gl_{r_c}$-weight space of $\Gamma=\Gamma(r_c)$.
Call $\Gamma^0$ the space of super Pfaffians in $V^{\otimes r_c}$.
\end{definition}
\begin{lemma}
The space $\Gamma^0$ of super Pfaffians is the simple $\Sym_{r_c}$-submodule in $V^{\otimes r_c}$ associated with the partition
of rectangular shape with $m$ rows and $2n+1$ columns.
\end{lemma}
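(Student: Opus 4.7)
The plan is to identify $\Gamma^0$ directly from the highest-weight description of $\Gamma(r_c)$ as a simple $\gl_{r_c}$-module, and then invoke the zero-weight-space result recalled in the Remark following \eqref{eq:Schur}.

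First, I would observe that by construction $\Gamma(r_c)$ is the simple $\gl_{r_c}$-submodule of $\cS(r_c)$ generated by $\Omega$, and by Theorem \ref{thm:main1} (together with the paragraph preceding Definition \ref{def:Pfaffian-space}) it is the simple $\gl_{r_c}$-module $L_\lambda$ with highest weight
\[
\lambda = (2n+1)\omega_m = (\underbrace{2n+1,\dots,2n+1}_{m},\, 0,\dots,0),
\]
which is the rectangular partition with $m$ rows and $2n+1$ columns. Since $|\lambda| = m(2n+1) = r_c$, $\lambda$ is in fact a partition of $r_c$, so it makes sense to talk about the simple $\Sym_{r_c}$-module $D_\lambda$.

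Second, I would combine \eqref{eq:zerowt}, which identifies $\cS^{(1,\dots,1)}(r_c) = V^{\otimes r_c}$, with the Schur--Weyl decomposition \eqref{eq:Schur} and the Remark after it (recalling Kostant's theorem \cite{Ko}): for any partition $\mu$ of $N$, the zero $\gl_N$-weight space of $L_\mu$ is, as a $\Sym_N$-module, isomorphic to the simple $\Sym_N$-module $D_\mu$ associated with $\mu$. Applied to $\Gamma(r_c) = L_\lambda$ with $\lambda = ((2n+1)^m)$, this gives $\Gamma^0 = L_\lambda^0 \cong D_\lambda$ as a $\Sym_{r_c}$-module, which is exactly the statement to be proved.

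There is really no obstacle here: the lemma is a direct packaging of (i) the highest-weight description of $\Gamma$ already established, (ii) the size check $m(2n+1) = r_c$, and (iii) the standard identification of zero weight spaces of simple $\gl_N$-modules $L_\mu$ with the corresponding Specht modules $D_\mu$ via Schur--Weyl duality. The only minor point worth making explicit is that the $\Sym_{r_c}$-action appearing in \eqref{eq:Schur} coincides with the permutation action on the tensor factors of $V^{\otimes r_c} = \cS^{(1,\dots,1)}(r_c)$, which is immediate from the definitions.
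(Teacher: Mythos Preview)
Your proposal is correct and is exactly the argument the paper intends: the lemma is stated without proof because it follows immediately from the highest weight of $\Gamma(r_c)$ being $(2n+1)\omega_m$, the equality $|(2n+1)\omega_m|=r_c$, and the Remark after \eqref{eq:Schur} identifying zero $\gl_N$-weight spaces with the corresponding simple $\Sym_N$-modules.
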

We have the following result.

\begin{theorem} \label{thm:tensor-pseudo-invs}
\begin{enumerate}
\item If $N<r_c$， then $\left(V^{\otimes N}\right)^{\OSp(V), \det}=0$.
\item If $N\ge r_c$, then $\left(V^{\otimes N}\right)^{\OSp(V), \det}=
\C\Sym_N\left(\Gamma^0\otimes (V^{\otimes (N-r_c)})^{\OSp(V)}\right)$.
\end{enumerate}
\end{theorem}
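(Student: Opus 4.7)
Part (1) follows by a size estimate from Theorem \ref{thm:tensor-invs}(2): every partition $\lambda\vdash N$ indexing a summand satisfies $\lambda_m\ge 2n+1$, hence $\lambda_i\ge 2n+1$ for all $i\le m$ and $|\lambda|\ge m(2n+1)=r_c$. If $N<r_c$ no such $\lambda$ exists, so the decomposition is empty.

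For Part (2), the starting point is Theorem \ref{thm:generating-set}, which gives $\cS(N)^{\OSp(V),\det}=\Gamma(N)\cdot\cS(N)^{\OSp(V)}$. The plan is to restrict both sides to the $\gl_N$-weight $(1,\dots,1)$ subspace; on the left this yields $(V^{\otimes N})^{\OSp(V),\det}$ via \eqref{eq:zerowt}. Since every element of $\Gamma(N)$ has total degree $r_c$, a product $f\cdot g$ of multidegree $(1,\dots,1)$ with $f\in\Gamma(N)$ and $g\in\cS(N)^{\OSp(V)}$ forces $f$ to have multidegree $e_I:=\sum_{i\in I}e_i$ for some $r_c$-subset $I\subset\{1,\dots,N\}$, and $g$ to have the complementary multidegree $e_{I^c}$. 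Moreover, since multiplication in $\cS(N)\cong\cS(1)^{\otimes N}$ is componentwise, this product coincides with the tensor product of $f$ (in the slots $I$) and $g$ (in the slots $I^c$) inside $V^{\otimes N}$.

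The key identifications to establish are: (i) the multidegree $e_{I^c}$ part of $\cS(N)^{\OSp(V)}$ equals $(V^{\otimes(N-r_c)})^{\OSp(V)}$ embedded in the tensor slots $I^c$; (ii) the multidegree $e_I$ part of $\Gamma(N)$ equals $\Gamma^0$ embedded in the slots $I$. Claim (i) is immediate from the identification $\cS^{e_{I^c}}(N)\cong V^{\otimes(N-r_c)}$ together with $\OSp(V)$-equivariance. For (ii), I would first observe that $\Gamma(r_c)\subset\Gamma(N)$ as subspaces of $\cS(N)$, since both are the simple submodules generated by $\Omega$ (under $\gl_{r_c}$ and $\gl_N$ respectively, with $\gl_{r_c}\subset\gl_N$); this gives the inclusion $\Gamma^0\subset\Gamma(N)\cap\cS^{e_I}(N)$ for $I=\{1,\dots,r_c\}$. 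Equality then follows from a dimension count: $\Gamma(N)\cong L_{((2n+1)^m)}$ as a $\gl_N$-module, so the $e_I$-weight multiplicity is the Kostka number $K_{((2n+1)^m),(1^{r_c})}$, which counts standard Young tableaux of shape $((2n+1)^m)$, hence equals $\dim D_{((2n+1)^m)}=\dim\Gamma^0$. Permuting slots by $\Sym_N$ extends the identification to every $r_c$-subset $I$.

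Summing (i) and (ii) over all $r_c$-subsets $I$, and using transitivity of the $\Sym_N$-action on such subsets, yields
\[
(V^{\otimes N})^{\OSp(V),\det}=\C\Sym_N\bigl(\Gamma^0\otimes(V^{\otimes(N-r_c)})^{\OSp(V)}\bigr),
\]
as required. The main obstacle is step (ii): the inclusion $\Gamma^0\subset\Gamma(N)\cap\cS^{e_I}(N)$ is straightforward, but ruling out ``extra'' weight-$e_I$ vectors produced in $\Gamma(N)$ by $\gl_N$-operators outside $\gl_{r_c}$ acting on $\Omega$ requires the Kostka-number dimension comparison above.
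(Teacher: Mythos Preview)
Your proof is correct and follows the same approach as the paper: both derive Part (2) from Theorem \ref{thm:generating-set} by passing to the $(1,\dots,1)$ weight space, and both obtain Part (1) from the condition $\lambda_m>2n$ in Theorem \ref{thm:tensor-invs}. The paper records this passage as ``immediate from Theorem \ref{thm:main2},'' whereas you have spelled out the details, in particular the Kostka-number dimension comparison establishing step (ii).
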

\begin{proof}
This is immediate from  Theorem \ref{thm:main2}. Note that
part (1) follows more directly from the condition $\lambda_m>2n$ in Theorem \ref{thm:tensor-invs}.
\end{proof}

Let $\hat{C}_{i j}: V^{\otimes r}\longrightarrow V^{\otimes(r-2)}$
be the map given by contracting the $i$-th and $j$-th copies in
$V^{\otimes r}$ using the bilinear form on $V$. For example,
$\hat{C}_{1 2}: v_1\otimes v_2\otimes v_3\otimes \dots\otimes v_r\mapsto
( v_1,  v_2) v_3\otimes \dots\otimes v_r$.  Clearly all $\hat{C}_{i j}$  ($1\le i<j\le r$)
are $\OSp(V)$-module homomorphisms.
The {\em harmonic subspace} of $V^{\otimes r}$ is the intersection of the kernels of all $\hat{C}_{i j}$.

\begin{corollary}\label{cor:tensor-pseudo-invs}
We have $\left(V^{\otimes N}\right)^{\OSp(V), \det}\ne 0$ if and only if $N-r_c$ is a non-negative even integer,
and in this case
\[
\left(V^{\otimes N}\right)^{\OSp(V), \det}=
\C\Sym_N\left(\Gamma^0\otimes\check{C}^{\otimes\frac{N-r_c}{2}}\right).
\]
Furthermore, $\Gamma^0$ is contained in the harmonic subspace of $V^{\otimes r_c}$.
\end{corollary}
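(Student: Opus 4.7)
The plan is to derive the corollary as an immediate consequence of Theorem \ref{thm:tensor-pseudo-invs} combined with the tensor-form FFT \eqref{eq:FFT-tensor} for $\OSp(V)$; the harmonicity statement will then follow quickly from the equivariance of the contraction maps.

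First I would handle the non-vanishing criterion and the explicit formula in one stroke. If $N < r_c$, part (1) of Theorem \ref{thm:tensor-pseudo-invs} gives $(V^{\otimes N})^{\OSp(V),\det} = 0$ directly. If $N \geq r_c$, part (2) of that theorem asserts
\[
(V^{\otimes N})^{\OSp(V),\det} = \C\Sym_N\bigl(\Gamma^0 \otimes (V^{\otimes(N-r_c)})^{\OSp(V)}\bigr).
\]
Substituting \eqref{eq:FFT-tensor} into the second tensor factor makes the right-hand side vanish when $N-r_c$ is odd, while for $N-r_c$ a non-negative even integer it becomes $\C\Sym_N\bigl(\Gamma^0 \otimes \C\Sym_{N-r_c}(\check{C}^{\otimes(N-r_c)/2})\bigr)$. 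Since $\Sym_{N-r_c}$ embeds in $\Sym_N$ as the subgroup fixing the first $r_c$ tensor positions, the outer $\C\Sym_N$ absorbs the inner $\C\Sym_{N-r_c}$, yielding the claimed expression $\C\Sym_N(\Gamma^0 \otimes \check{C}^{\otimes(N-r_c)/2})$. Non-vanishing in the even case can be confirmed independently via Theorem \ref{thm:tensor-invs}(2): the weight $\lambda = (2n+1)\omega_m + (N-r_c)\omega_1$ meets the conditions stated there and is a partition of $N$, so the summand $D_\lambda$ is nonzero.

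For the harmonic assertion I would argue by equivariance. Each contraction $\hat{C}_{ij}\colon V^{\otimes r_c} \to V^{\otimes(r_c-2)}$ is an $\OSp(V)$-module homomorphism, hence both $\osp(V)$-equivariant and intertwining the $\OSp(V)_0$-action; it therefore sends pseudo invariants to pseudo invariants. Since every element of $\Gamma^0$ is an $\OSp(V)_0$ pseudo invariant (by Theorem \ref{thm:main1} applied to the zero weight vectors of $\Gamma(r_c) \subseteq \cS(r_c)^{\OSp(V),\det}$), we have $\hat{C}_{ij}(\Gamma^0) \subseteq (V^{\otimes(r_c-2)})^{\OSp(V),\det}$, and $r_c-2 < r_c$ forces the target to vanish by the criterion just established. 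Intersecting over all $(i,j)$ places $\Gamma^0$ inside the harmonic subspace.

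There is no substantive obstacle; the only care needed is in tracking the nested-symmetrization step, and in verifying non-vanishing of the even case by exhibiting a valid partition $\lambda$. Once Theorems \ref{thm:main2} and \ref{thm:tensor-pseudo-invs} are in hand, the corollary is essentially bookkeeping with no new ingredient.
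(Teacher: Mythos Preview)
Your proposal is correct and follows essentially the same route as the paper: both derive the formula by feeding the tensor FFT \eqref{eq:FFT-tensor} into part (2) of Theorem \ref{thm:tensor-pseudo-invs}, and both obtain harmonicity by observing that $\hat{C}_{ij}(\Gamma^0)\subseteq (V^{\otimes(r_c-2)})^{\OSp(V),\det}=0$ via part (1). Your write-up adds two pieces of detail the paper leaves implicit---the absorption of the inner $\C\Sym_{N-r_c}$ by the outer $\C\Sym_N$, and the explicit non-vanishing check via the partition $(2n+1)\omega_m+(N-r_c)\omega_1$ in Theorem \ref{thm:tensor-invs}(2)---but these are elaborations rather than a different argument.
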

\begin{proof}
The first part of the corollary follows from part (2) of Theorem \ref{thm:tensor-pseudo-invs} and equation \eqref{eq:FFT-tensor}.
For any $i\ne  j$, we have $\hat{C}_{i j}(\Gamma^0)\subset \left(V^{\otimes (r_c-2)}\right)^{\OSp(V), \det}=0$
by part (1) of Theorem \ref{thm:tensor-pseudo-invs}.
Hence $\Gamma^0$ is harmonic.
\end{proof}



\end{document}